\documentclass{amsart}

\usepackage{amsmath}
\usepackage{graphicx}
\usepackage{amsfonts}
\usepackage{amssymb}
\usepackage[utf8]{inputenc}
\usepackage{fancyhdr}
\usepackage{amsaddr}
\usepackage{fancyvrb}

\usepackage{datetime}
\usepackage{amsthm}
\usepackage{algorithm, caption}
\usepackage[noend]{algpseudocode}
\usepackage{bbm}
\usepackage{xcolor}
\usepackage[hidelinks]{hyperref}
\usepackage{colonequals}
\usepackage{comment}
\usepackage{listings}
\usepackage{tikz}

\usepackage[all]{xy} \SelectTips{eu}{}

\newtheorem{thm}{Theorem}[section]

  \newtheorem{con}[thm]{Conjecture}
  \newtheorem{lem}[thm]{Lemma}

  \newtheorem{qst}[thm]{Question}

  \theoremstyle{definition}
  \newtheorem{rem}[thm]{Remark}
  \newtheorem{defn}[thm]{Definition}
  \newtheorem{exam}[thm]{Example}
  \newtheorem{num}[thm]{} 
  \newtheorem{code}[thm]{Code}

  \numberwithin{equation}{thm}

  \makeatletter
  \let\c@algorithm\c@thm
  \makeatother

\newcommand{\F}{\mathbbm{F}}
\newcommand{\K}{\mathbbm{k}}

\newcommand{\X}{\mathbb{X}}
\newcommand{\Q}{\mathbb{Q}}
\newcommand{\init}{\operatorname{in}}
\newcommand{\ndiv}{\!\nmid\!}
\newcommand{\nf}{\operatorname{nf}}

\newcommand{\m}{\mathfrak{m}}
\newcommand{\fI}{\mathfrak{I}}

\newcommand{\fa}{\mathfrak{a}}
\newcommand{\fb}{\mathfrak{b}}

\newcommand{\cL}{\mathcal{L}}
\newcommand{\cF}{\mathcal{F}}
\newcommand{\cC}{\mathcal{C}}
\newcommand{\cG}{\mathcal{G}}
\newcommand{\Flags}{\mathcal{LF}}

\newcommand{\tagform}[1]{\textnormal{\small(#1)}}
\newcommand{\tagKFlin}{\tagform{KF1}}
\newcommand{\tagKFmax}{\tagform{KF2}}
\newcommand{\tagKFcol}{\tagform{KF3}}

\newcommand{\tagLF}{\tagform{LF}}

\title{Constructing Koszul filtrations: existence and non-existence for G-quadratic algebras}
\author{Emily Berghofer, Lisa Nicklasson, Peder Thompson,\\ and Thomas Westerbäck}\author{\vspace{-30pt}}
\address{Mälardalen University}
\email{emily.berghofer@mdu.se}
\email{lisa.nicklasson@mdu.se}
\email{peder.thompson@mdu.se}
\email{thomas.westerback@mdu.se}

\date{Monday 15\textsuperscript{th} June, 2026}
\keywords{Binomial edge ideal, G-quadratic algebra, generic determinantal ideal, Koszul algebra, Koszul filtration}
\subjclass[2020]{Primary 13D02; Secondary 13P10, 13P20, 16S37}

\begin{document}

\begin{abstract}
Given a standard graded algebra over a field, we consider the relationship between G-quadraticity and the existence of a Koszul filtration. We show that having a quadratic Gröbner basis implies the existence of a Koszul filtration for algebras defined by generic determinantal ideals and for algebras defined by binomial edge ideals. We also resolve a conjecture of Ene, Herzog, and Hibi by constructing an example where this implication fails. These results are underpinned by algorithms we develop for constructing Koszul filtrations. 
\end{abstract}
\maketitle

\section{Introduction}
\noindent
For a (commutative) standard graded algebra over a field, linearity of the minimal free resolution of its residue field defines the Koszul property. Introduced by Priddy \cite{Priddy1970}, this property can be challenging to algorithmically verify, as first exhibited by Roos \cite{Roos1993}. Indeed, McCullough and Seceleanu \cite{MS20} recently showed that no finite test for Koszulness exists even for quadratic Gorenstein algebras. Consequently, it is of interest to develop sufficient conditions for Koszulness. While the existence of a quadratic Gröbner basis is a classic method to identify a Koszul algebra, recent work of Mastroeni and McCullough \cite{MM2023} and Rice \cite{Rice2023} has highlighted the utility of Koszul filtrations. In this paper, we examine the relationship between these properties, with an emphasis on the construction of Koszul filtrations.

These properties are related via the following hierarchy. The existence of a Gröbner flag implies that an algebra is both G-quadratic---that is, it has a quadratic Gröbner basis after a linear change of variables---and has a Koszul filtration (Section \ref{sec_prelim} explains these notions), and these conditions imply Koszulness; see Conca, Rossi, and Valla \cite{CRV2001}. However, all of these implications are generally strict \cite{CRV2001}. It is also known that an algebra with a Koszul filtration is not necessarily G-quadratic. Explicit examples are given by Conca \cite{Conca2014} and D'Ali \cite{DAli2017,DAli2025}, and we give another such algebra in Example \ref{ex_Roos}. The diagram below summarizes these relations, where the dotted arrows indicate non-implications.
\[\xymatrixrowsep{15mm}\xymatrixcolsep{25mm}
\xymatrix{
\text{G-quadratic} \ar@/^0.5pc/@{=>}[r] \ar@/^0.5pc/@{.>}[d] |{\SelectTips{cm}{}\object@{|}}|{} \ar@/^0.5pc/@{-->}[dr]^{?} & \text{Koszul}\ar@/_0.5pc/@{.>}[d] |{\SelectTips{cm}{}\object@{|}}|{} \ar@/^0.5pc/@{.>}[l] |{\SelectTips{cm}{}\object@{|}}|{}\\
\text{Gröbner flag} \ar@/_0.5pc/@{=>}[r] \ar@/^0.5pc/@{=>}[u]& \text{Koszul filtration} \ar@/_0.5pc/@{=>}[u] \ar@/^0.5pc/@{.>}[ul] |{\SelectTips{cm}{}\object@{|}}|{} \ar@/_0.5pc/@{.>}[l] |{\SelectTips{cm}{}\object@{|}}|{}
}\]
So far, the status of the diagonal dashed implication arrow has been unknown:
\begin{qst}
Are there G-quadratic algebras without Koszul filtrations?
\end{qst}
The relationship between G-quadraticity and the existence of a Koszul filtration was investigated by Ene, Herzog, and Hibi \cite{Ene_2015}, who conjectured that the property of being G-quadratic need not imply the existence of a Koszul filtration. One of our main results, Theorem \ref{thm_b4t}, resolves this conjecture by providing the first such example:
\begin{thm}\label{thm_A}
There exists a G-quadratic algebra with no Koszul filtration.
\end{thm}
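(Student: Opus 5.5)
The plan is to exhibit one explicit quadratic algebra $R=S/I$, with $S=\K[x_1,\dots,x_n]$, that has a quadratic Gröbner basis but admits no Koszul filtration. The label \texttt{thm\_b4t} suggests looking among binomial edge ideals or closely related binomial ideals of small graphs. The argument has two halves of very different difficulty. G-quadraticity is a finite check: fix a term order, possibly after a linear change of coordinates, and verify by Buchberger's criterion that the given quadrics form a Gröbner basis. Non-existence of a Koszul filtration is a statement about all families of linear ideals, and it is the real content.

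For non-existence I would use the standard definition. A Koszul filtration is a finite family $\cF$ of ideals generated by linear forms, containing $0$ and $\m$, such that every nonzero $I\in\cF$ has some $J\in\cF$ with $J\subset I$, $I/J$ cyclic, and $J:I\in\cF$. Pruning this family gives a minimal ``chain'' structure. Each ideal in a minimal Koszul filtration has linear colon ideals along a flag $0=I_0\subset I_1\subset\dots\subset I_r=\m$, where every $I_{i-1}:I_i$ is generated by linear forms and itself lies in the filtration.

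A necessary condition then follows by working top-down from $\m$. There must be a linear form $\ell$ such that $(V):\ell$ is linear for some hyperplane $V$ of linear forms. Moreover, that colon ideal must itself admit a filtration.

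To make the obstruction finite, I would work over a small field, say $\F_2$ or $\Q$ with a symmetry argument, and reduce the search to finitely many orbits of linear ideals under the automorphism group of $R$. I would run the algorithm the paper develops for constructing Koszul filtrations. It builds the closure under colons and searches exhaustively for a family satisfying the axioms, and here it should terminate with failure.

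To pass from a finite field to arbitrary $\K$, I would either restrict to ideals generated by variables or exploit a torus action: with a multigrading, colon ideals by generic linear forms fail to be linear. I would show that any candidate $\ell$ must be, up to automorphism, in a short list, and rule out each case by exhibiting a nonlinear minimal generator of $(V):\ell$. A degree-2 element of the colon, computed via Hilbert functions of $R/(V)$, is enough.

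The main obstacle is the universal quantification over all linear ideals in characteristic zero. The first thing I would try is a Hilbert-series argument. If $(V):\ell$ is linear, then the Hilbert series of $R/(V)$ and of $R/(V+\ell)$ satisfy a rigid relation. In particular, $R/(V)$ must itself be Koszul with a filtration, so it is enough to show that no quotient of $R$ by a suitable linear ideal, at the level just below $\m$, satisfies these numerical constraints. This reduces the problem to a dimension count on a Grassmannian, handled by case analysis.
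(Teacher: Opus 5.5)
You never name an algebra. For an existence statement the witness, together with a complete non-existence argument, is the whole proof, and your decisive step (``here it should terminate with failure'') is only asserted. The place you propose to search is also exactly where the paper shows no witness exists. By Theorem~\ref{thm_BEI}, every binomial edge ideal with a quadratic Gröbner basis defines an algebra with a monomial Koszul filtration, and Theorem~\ref{thm_GDI} does the same for $I_2(\X)$. Conjecture~\ref{conjecture} predicts a Koszul filtration for every reduced quadratic binomial Gröbner basis with disjoint-support terms. The label \texttt{b4t} stands for the broken $4$-trampoline. The paper's witness is the graded Möbius algebra $B$ of Example~\ref{b4t}, in twelve variables, defined by all squares $a^2,\dots,l^2$ together with $xy-xz$ and $xy-yz$ for each triangle with edges $x,y,z$. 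These binomials share a variable, so $B$ lies outside the disjoint-support setting. It is G-quadratic over every field via the lex order $a>b>c>e>h>k>l>i>f>j>g>d$ (Remark~\ref{rem_b4tGQ}).

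Your treatment of the field and of the search also goes wrong. Since the statement is existential, you may fix $\K=\F_3$; no characteristic-zero or field-independent argument is needed. For this witness none is possible: over $\F_2$ the same $B$ has a Koszul filtration (Remark~\ref{rem_b4tKF}), so your suggested choice $\F_2$ would fail. A top-down test just below $\m$ obstructs nothing here, since $l^2=0$ gives $(a,\dots,k):l=\m$. Running Algorithms~\ref{alg_partial_linear} and~\ref{alg_partial_koszul} with $L=B_1$ over $\F_3$ is computationally hopeless, because they range over subsets of a set of about $3^{12}$ forms.

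The paper instead works bottom-up. Every Koszul filtration contains some principal $(w)$ together with $(0):w$, so $(0):w$ must be linearly generated. An exhaustive check of all $(3^{12}-1)/2$ forms up to sign leaves $37$ candidates (Lemma~\ref{lem_37linearforms}). Each such $(w)$ is then shown to be forbidden in one of three ways:
\begin{enumerate}
\item $(0):w$ has a nonlinear resolution (Lemma~\ref{lem_abc});
\item every partial linear flag of $(0):w$ has a colon ideal with a nonlinear resolution (Lemma~\ref{lem_dgj});
\item $(w)$ forces one of $(f,e,d)$, $(i,h,g)$, $(l,k,j)$, and these are ruled out by a case analysis on the step at which $b-g$ (resp.\ $c-j$) enters a partial flag (Lemma~\ref{lem_fed}).
\end{enumerate}
The automorphisms of Lemma~\ref{lem_permutation} reduce the casework.
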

\noindent
The example is motivated by recent work of LaClair, Mastroeni, McCullough, and Peeva \cite{LMMP_2025} on Koszulness of graded Möbius algebras. Our proof utilizes computations in Macaulay2, as well as algorithms for computing Koszul filtrations that we develop in Section \ref{sec_alg}. It also shows that the existence of a Koszul filtration depends on the characteristic of the field.

Many nice classes of G-quadratic algebras do have Koszul filtrations, however. For example, quadratic monomial algebras (see Conca, De Negri, and Rossi \cite{conca2012koszulalgebrasregularity}) and Hibi rings (see Lu and Zhang \cite{LZ17}) satisfy this property. Recent work of D'Ali considers another nice class of G-quadratic algebras which are strongly Koszul, and thus have Koszul filtrations \cite[Theorem B]{DAli2025}. Another aim of this paper is to provide broader contexts where this property holds. Theorems \ref{thm_GDI} and \ref{thm_BEI} show: 
\begin{thm}\label{thm_B}
The following ideals define algebras with Koszul filtrations if they have a quadratic Gröbner basis. 
\begin{enumerate}
    \item Generic determinantal ideals.
    \item Binomial edge ideals.
\end{enumerate}
\end{thm}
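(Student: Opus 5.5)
The plan is to treat the two classes separately, and in each case first pin down exactly when a quadratic Gröbner basis exists, then build a Koszul filtration by hand for exactly those algebras. Recall that a Koszul filtration of $R=S/I$ is a family $\cF$ of ideals generated by linear forms with $0,\m\in\cF$. For every nonzero $J\in\cF$ we need some $J'\in\cF$ with $J'\subset J$, $J/J'$ cyclic, and $J'\colon J\in\cF$.

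For generic determinantal ideals $I_t(X)$ of an $m\times n$ generic matrix, the ideal is generated in degree $t$. It therefore has a quadratic Gröbner basis only if $t=2$, or $t=1$, or $t>\min(m,n)$. In the last case $I_t(X)=0$ and $R$ is a polynomial ring. So the only nontrivial case is $t=2$, the Segre product / ladder-type algebra $K[X]/I_2(X)$; when $t=1$ the ideal consists of linear forms and is trivial. For $t=2$ I would take $\cF$ to be all ideals generated by the variables in an \emph{order ideal} of entries. Concretely, $\cF$ consists of the ideals $(x_{ij} : (i,j)\in L)$, where $L$ is a subset of the grid closed under moving up and left, or some similar ladder shape.

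For such $J=(x_{ij}:(i,j)\in L)$, I would remove a maximal corner $(a,b)$ of $L$ to get $J'$. The colon $J'\colon x_{ab}$ should be computed via the diagonal term order, where $\init(I_2)$ is generated by the products $x_{ij}x_{kl}$ with $i<k$, $j<l$. Using that $I_2+J'$ again has a quadratic Gröbner basis, the colon should be generated by $J'$ together with the variables $x_{kl}$ with $k>a$, $l>b$, or the analogous set. This is again an ideal of variables, but possibly of the wrong shape. So I expect to enlarge $\cF$ to all ideals generated by variables in sets $L$ which are unions of a staircase and a lower-right rectangle, and then verify closure. The main obstacle here is choosing a shape family that is closed under these colon operations. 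An alternative is to invoke Conca–Rossi–Valla: the $2$-minor algebra is a Segre product and strongly Koszul, so the ideals of variables give the filtration directly.

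For binomial edge ideals $J_G$, a known theorem (Herzog et al.) says $J_G$ has a quadratic Gröbner basis iff $G$ is closed, that is, admits a labeling with the closed property. I would then work with a closed labeling, which is a proper interval graph with consecutive maximal cliques. Take $\cF$ to consist of ideals generated by initial segments of the variables in the order $x_1,\dots,x_n,y_n,\dots$, or more flexibly by sets $\{x_i: i\in A\}\cup\{y_j:j\in B\}$ with $A,B$ intervals. For each such ideal, remove the last variable and compute the colon modulo $J_G$. Because the labeling is closed, the generators $f_{ij}=x_iy_j-x_jy_i$ ($i<j$, $\{i,j\}\in E$) form a quadratic Gröbner basis whose leading terms are $x_iy_j$. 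Adding variables preserves this. So the colon $(J_G+(\text{vars}))\colon v$ is generated by the variables and the leading-term neighbours, for example $y_j$ with $j>i$ adjacent to $i$. By closedness, the neighbours of $i$ with larger label form an interval $\{i+1,\dots,k\}$. Hence the colon stays within the family of interval-type variable ideals.

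The key steps are therefore:
\begin{enumerate}
\item Reduce each class to its G-quadratic members: $t=2$ for generic determinantal ideals, closed graphs for binomial edge ideals.
\item Choose a family of variable ideals indexed by combinatorial shapes.
\item Compute colons through the quadratic Gröbner basis, using that the colon of a monomial ideal by a variable is again monomial.
\item Check that the family is closed under these colons.
\end{enumerate}
The delicate point in step~3 is lifting the colon from the initial ideal to $I$ itself. I would handle it by showing that the Gröbner basis of $I+J'$ stays quadratic with the same leading terms. Then $\init(I+J')\colon v = \init((I+J')\colon v)$ whenever the computed colon is generated by variables, and this is verified by comparing Hilbert functions.
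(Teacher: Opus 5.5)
\textbf{Verdict: there is a genuine gap, and it is in your step~3.} You assert that adding variables preserves the quadratic Gröbner basis with the same leading terms, so the colon can be read off from leading terms. This is false for the families you propose. For a reduced binomial Gröbner basis $G$ and a set $X$ of variables, $G\cup X$ is a Gröbner basis only when $X$ is compatible with $G$: whenever $x\in X$ divides $\init(g)$, the other term of $g$ must also lie in $(X)$. This is the paper's strong $G$-set condition. Initial segments $x_1,x_2,\dots$ violate it.

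Concretely, take the closed path with edges $\{1,2\},\{2,3\}$, so $G=\{x_1y_2-x_2y_1,\ x_2y_3-x_3y_2\}$.
\begin{itemize}
\item $x_2y_1\in\init(I_\Gamma+(x_1))$ but $x_2y_1\notin(x_1,x_1y_2,x_2y_3)$, so $G\cup\{x_1\}$ is not a Gröbner basis.
\item In $R=S/I_\Gamma$ one computes $(x_1):x_2=(x_1,y_1)$. Your leading-term recipe predicts $(x_1,y_3)$, which is wrong.
\item $(x_1,y_1)$ is not an initial segment, so your chain is not closed under colons.
\item The more flexible ``interval'' family contains $(x_2)$. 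But $(0):x_2$ is generated by the quadric $x_1y_3-x_3y_1$, so $(x_2)$ lies in no Koszul filtration.
\end{itemize}
Your Hilbert-function check does not repair this. From the exact sequences for multiplication by $v$, equality of the Hilbert functions of $S/(\fI:v)$ and $S/(\init(\fI):v)$ is equivalent to $\init(\fI+(v))=\init(\fI)+(v)$. That is the same compatibility condition again.

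\textbf{The missing idea} is to choose the family so that compatibility is built in and preserved. The paper works with strong $G$-sets. If both $X'$ and $X'\cup\{z\}$ are strong $G$-sets, then $(G\cup X'):z=(G\cup X'')$ with $X''=X'\cup\{\init(g)/z\}$, again a strong $G$-set (Theorem~\ref{thm_strong_colon}). For binomial edge ideals the right family is \eqref{eq_BEI_G-sets}. It is closed under deleting the largest variable in the order $y_1>x_1>y_2>\cdots>y_n>x_n$, and closedness of $\Gamma$ is exactly what keeps $X''$ in the family. In your terms, the flags must grow from $x_n$, not from $x_1$. Only then does your observation that the larger neighbours of a vertex form an interval do the work you want.

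\textbf{Part (1).} Your primary route has the same flaw. $S/I_2(\X)$ is a domain, so $(0):x_{ab}=0$, whereas your formula would put $x_{kl}$ with $k>a,\ l>b$ into it. You are computing the colon in $\init(I_2)$ rather than in $I_2$. (Up-left order ideals do in fact work: deleting a maximal corner $(a,b)$ gives a colon generated by all variables in rows $<a$ or columns $<b$. But that is not what you argued.)

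Your fallback is correct and genuinely different from the paper's proof. $S/I_2(\X)\cong\K[s_it_j]$ is strongly Koszul; this is due to Herzog--Hibi--Restuccia, not Conca--Rossi--Valla. A monomial lies in $(s_it_j)$ if and only if it involves both $s_i$ and $t_j$. Hence every colon $(Y):u$, with $Y$ a set of variables and $u$ a variable, is generated by variables. So all variable-generated ideals form a Koszul filtration, which is shorter than the paper's $\cG$-family argument. No such shortcut exists for (2), as the ideal $(x_2)$ above shows.
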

\noindent
Our proof of this theorem relies on Theorem \ref{thm_strong_colon}, which establishes the linearity of certain colon ideals. In fact, this linearity holds even more generally, as shown in Theorem \ref{thm_Gset}, and these two results generalize \cite[Theorems 1.1 and 1.3]{Ene_2015}.

The paper starts with some notation and preliminaries on Koszul algebras in Section \ref{sec_prelim}. Section \ref{sec_Gset} introduces (strong) $G$-sets and $G$-sequences---these are sets of linear forms which behave well in relation to a given Gröbner basis $G$---and uses these as a central tool to prove Theorems \ref{thm_Gset} and \ref{thm_strong_colon}. In Section \ref{sec_binomial} we consider the context of binomial ideals, where we use these tools to establish Theorem \ref{thm_B} above. We turn to developing algorithms for finding Koszul filtrations more generally in Section \ref{sec_alg}. Implementations of these are given in Appendix \ref{sec_appendix}, and are used for computations throughout the paper. Indeed, these algorithms form the basis for establishing both existence and non-existence of Koszul filtrations. Finally, Section \ref{sec_b4t} is devoted to proving Theorem \ref{thm_A} above. 

\section{Preliminaries}\label{sec_prelim}
\noindent
Let $\K$ be a field and $S=\K[x_1,\ldots,x_n]$ be the (commutative) polynomial ring in variables $x_1,\ldots,x_n$. Throughout this paper, $R=\bigoplus_{i\geq 0} R_i$ denotes a standard graded $\K$-algebra of embedding dimension $n$; any such algebra is isomorphic to $S/I$ for a homogeneous ideal $I\subset (x_1,\ldots,x_n)^2$. We refer to the ideal $I$ as the defining ideal of $R$, and denote by $\m_R$ the homogeneous maximal ideal of $R$. Here we briefly recall some aspects of Gröbner bases, Koszul algebras, and binomial ideals.

\begin{num}[Gröbner bases]\label{par_GB}
Consider any term order on $S$. For a nonzero $f\in S$ and an ideal $\fI\subseteq S$, denote by $\init(f)$ and $\init(\fI)$ the initial term of $f$ and the initial ideal of $\fI$, respectively. Recall, for example from \cite[Chapter 1]{Sturmfels1996}, that a finite set $G\subseteq \fI$ is a \emph{Gröbner basis} for $\fI$ if $\init(\fI)=(\init(g))_{g\in G}$, and in particular $\fI=(G)$. The Gröbner basis $G$ is \emph{reduced} if all $g\in G$ are monic, and for any distinct $g,g'\in G$ no term of $g'$ is divisible by $\init(g)$. Every ideal has a unique reduced Gröbner basis w.\,r.\,t.\ the given term order.  Throughout, we tacitly assume that Gröbner bases consist of homogeneous polynomials. For more background on Gröbner bases, see for example \cite{Froberg1997}.

Two term orders used frequently are the \emph{lexicographic} and \emph{degree reverse lexicographic} term orders induced by $x_1>\cdots >x_n$. In the lexicographic term order $x_1^{\alpha_1}\cdots x_n^{\alpha_n}>x_1^{\beta_1}\cdots x_n^{\beta_n}$ if for the smallest index $i$ with $\alpha_i \ne \beta_i$ one has $\alpha_i>\beta_i$. 
In the degree reverse lexicographic term order we instead have $x_1^{\alpha_1}\cdots x_n^{\alpha_n}>x_1^{\beta_1}\cdots x_n^{\beta_n}$ if $\sum_{i=1}^n\alpha_i>\sum_{i=1}^n\beta_i$, or if $\sum_{i=1}^n\alpha_i=\sum_{i=1}^n\beta_i$ and, for the largest index $i$ with $\alpha_i\not=\beta_i$, one has $\alpha_i<\beta_i$. 
\end{num}

\begin{num}[Koszul algebras]\label{par_Koszul}
A standard graded $\K$-algebra $R$ is called \emph{Koszul} if the $R$-module $\K=R/\m_R$ has a linear minimal free resolution, see for example \cite{conca2012koszulalgebrasregularity}. Gröbner bases provide a useful way to show that an algebra is Koszul. Namely, if an algebra is G-quadratic, then it is Koszul, by Fröberg \cite{Froberg1975} and Anick \cite{Anick1986}, see also \cite[Remark 1.13]{Conca2014}. Recall here that $R$ is \emph{G-quadratic} if---after some linear change of coordinates of $S$---the image of the defining ideal $I$ has a quadratic Gröbner basis.

Another way to verify an algebra is Koszul is to exhibit a certain type of filtration. 
A \emph{Koszul filtration} of $R$ is a set $\mathcal{F}$ of ideals satisfying:
\begin{enumerate}
    \item[\tagKFlin] Every ideal in $\mathcal{F}$ is generated by elements of $R_1$.
    \item[\tagKFmax] The maximal ideal $\m_R$ belongs to $\mathcal{F}$.
    \item[\tagKFcol] For every nonzero ideal $\fa\in \mathcal{F}$, there exists an ideal $\fb\in \mathcal{F}$ contained in $\fa$ such that $\fa/\fb$ is cyclic and $\fb:\fa\in \mathcal{F}$.
\end{enumerate}
Recall here that for ideals $\fa,\fb\subseteq R$, the colon ideal is $\fb:\fa=\{f\in R \mid f\fa \subseteq \fb\}$. Observe that a Koszul filtration contains the zero ideal---this is a consequence of \tagKFmax\ and \tagKFcol---and so the definition given here is equivalent to the original one given by Conca, Trung, and Valla in \cite{CTV2001}, which was in turn motivated by work of Herzog, Hibi, and Restuccia \cite{HHR2000}. The importance of Koszul filtrations stems from the observation that the existence of a Koszul filtration implies that the algebra is Koszul \cite[Proposition 1.2]{CTV2001}. 

We say that a Koszul filtration $\cF$ is a \emph{monomial Koszul filtration} if each ideal in $\cF$ is generated by variables.

By definition, every Koszul filtration must contain a \emph{linear flag}, meaning a chain of ideals $(0)=\fa_0\subset \fa_1\subset \fa_2\subset \cdots \subset \fa_n=\m_R$, where $\fa_j$ is minimally generated by $j$ elements of $R_1$, and such that for all $j$ the colon ideal $\fa_j:\fa_{j+1}$ is generated by elements of $R_1$. Linear flags were defined in \cite{Ene_2015}. A \emph{Gröbner flag} is a linear flag which is itself a Koszul filtration. This is a very special Koszul filtration; indeed, the existence of a Gröbner flag implies that $R$ is G-quadratic \cite[Lemma 3.10]{conca2012koszulalgebrasregularity}.
\end{num}

\begin{num}[Binomial ideals]\label{par_binomial}
Our study of algebras having Koszul filtrations will place a special emphasis on algebras defined by binomial ideals. A \emph{binomial} ideal is an ideal that is generated by (homogeneous) polynomials consisting of at most two terms. An important class of binomial ideals is that of the toric ideals: For an $m\!\times\! n$ matrix of integers $\{a_{ij}\}$, the associated \emph{toric ideal} is the kernel of the homomorphism $\varphi:\K[x_1,\ldots,x_n]\to \K[t_1^{\pm 1}\!,\ldots,t_m^{\pm1}]$ defined by $x_j\mapsto t_1^{a_{1j}}\!\cdots t_m^{a_{mj}}$. A generating set for the toric ideal consists of binomials $u-v$, with $\varphi(u)=\varphi(v)$, such that $\gcd(u,v)=1$; see \cite[Lemma 4.1]{Sturmfels1996}. In other words, each generator has terms of disjoint support. Toric ideals are prime, and if the field $\K$ is algebraically closed then every binomial prime ideal is in fact isomorphic to a toric ideal; see \cite[Corollary 2.4]{Eisenbud-Sturmfels1996}.

As seen in \cite[Corollary 4.4]{Sturmfels1996} every toric ideal has a reduced Gröbner basis consisting of binomials each having terms of disjoint support, given any term order. However, binomials with this property may form a Gröbner basis for an ideal which is not toric. For example, the ideal $(x_1x_2-x_3x_4,\,x_3x_5-x_1x_6)$ in $\K[x_1,\ldots,x_6]$ is not prime and hence not toric. On the other hand, the generators of this ideal clearly have terms of disjoint support. Moreover, the given generating set is a Gröbner basis with respect to the degree reverse lexicographic term order, so the quotient ring is G-quadratic and hence Koszul.
\end{num}

\section{Gröbner bases and linear colon ideals}\label{sec_Gset}
\noindent
Let $S=\K[x_1,\ldots,x_n]$ and $R=S/I$ be as above, and let $G$ be a Gröbner basis for $I$ with respect to some term order. In this section, we consider a certain condition on $G$ in relation to a set of linear forms, which allows us to generalize results on linear colon ideals from \cite{Ene_2015}.
To do this, we first introduce two types of sets of linear forms: $G$-sets and $G$-sequences. 

Given a Gröbner basis (of linear forms) $X \subset S$ and a polynomial $p \in S$ we let $\nf_X(p)$ denote the normal form of $p$ modulo $X$. For a set $P$ of polynomials in $S$, let
\[
\nf_X(P) = \{ \nf_X(p) \ | \ p \in P \ \text{and} \ \nf_X(p) \ne 0 \}.
\]
Notice also that, if $X$ is a set of linear forms in $S$, then $X$ is a reduced Gröbner basis if and only if the linear forms have pairwise distinct leading terms. In particular, the cardinality of such a set $X$ is at most $n$. 

\begin{defn}\label{dfn_Gset}
Let $G \subseteq S$ be a Gröbner basis w.\,r.\,t.\ a term order $>$. A set $X$ of linear forms in $S$ is called a \emph{$G$-set} if $X$ itself and $\nf_X(G) \cup X$ both are Gröbner bases w.\,r.\,t.\ the term order $>$.
\end{defn}
Note that, in particular $\nf_X(G) \cup X$ is a Gröbner basis for $(G \cup X)$, as the two sets generate the same ideal. 

\begin{defn}\label{dfn_Gsequence}
Let $G$ be a Gröbner basis. A $G$-set $X$ is called a \emph{$G$-sequence} if there exists a chain of strict containments
\[ X = X_0 \supset X_1 \supset X_2 \supset \dots \supset X_m= \varnothing \]
where $m=|X|$ and each $X_j$ is a $G$-set. 
\end{defn}
Note that the $G$-sets $X_j$ are implicitly also $G$-sequences. 

\begin{exam}\label{ex_Gsequence}
The set
\[G=\{x_{1}x_{4}-x_{2}x_{3},\, x_{2}x_{5}-x_{3}x_{4},\, x_{3}x_{6}-x_{4}x_{5},\,x_{2}x_{6}-x_{4}^{2},\,x_{1}x_{6}-x_{3}x_{4},\,x_{1}x_{5}-x_{3}^{2}\} \]
is a Gröbner basis for a toric ideal w.\,r.\,t.\ the lexicographic term order such that $x_1 > \dots > x_6$. The set 
\[
\nf_{\{x_3\}}(G) \cup \{x_3\} = \{x_{1}x_{4},\, x_{2}x_{5},\, -x_{4}x_{5},\,x_{2}x_{6}-x_{4}^{2},\,x_{1}x_{6},\,x_{1}x_{5},\, x_3\} 
\]
is a Gröbner basis, and so is 
\[
\nf_{\{x_3, x_4\}}(G) \cup \{x_3, x_4\} = \{x_{2}x_{5},\,x_{2}x_{6},\,x_{1}x_{6},\,x_{1}x_{5},\, x_3, \, x_4\}.
\]
For any $\{x_3, x_4\} \subseteq X \subseteq \{ x_1, \ldots, x_6\}$ the set $\nf_X(G) \cup X$ will consist of monomials and hence be a Gröbner basis. In particular $\{ x_1, \ldots, x_6\}$ is a $G$-sequence.
\end{exam}

Next, we introduce a special type of $G$-set. 

\begin{defn}\label{def_strongG-set}
Let $G$ be a Gröbner basis. 
A set $X \subseteq \{x_1, \ldots , x_n \}$ is called a \emph{strong $G$-set} if whenever $x\in X$, $g \in G$, and $x | \init(g)$ then $g \in (X)$. 
\end{defn}

\begin{lem}\label{lem_strong_is_Gset}
Strong $G$-sets are $G$-sets.
\end{lem}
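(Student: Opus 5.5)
The plan is to verify the two conditions of Definition~\ref{dfn_Gset} directly. For a set of variables, normal forms are simply substitution: $\nf_X(p)=p|_{X=0}$. Then check $\nf_X(G)\cup X$ with Buchberger's criterion, in its "standard representation" form.

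First, $X\subseteq\{x_1,\ldots,x_n\}$ consists of variables with pairwise distinct leading terms, so $X$ is a reduced Gröbner basis, as remarked before Definition~\ref{dfn_Gset}. Let $\pi\colon S\to S$ be the $\K$-algebra homomorphism sending each $x\in X$ to $0$ and fixing the other variables. Then $\nf_X(p)=\pi(p)$, since $\pi(p)$ is $p$ with every term divisible by some $x\in X$ deleted. Now take $g\in G$ and split into two cases.
\begin{itemize}
\item If some $x\in X$ divides $\init(g)$, the strong $G$-set hypothesis gives $g\in(X)$, so $\pi(g)=0$ and $g$ contributes nothing to $\nf_X(G)$.
\item Otherwise $\init(g)$ involves no variable of $X$, so $\init(g)$ survives in $\pi(g)$. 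Being the largest term of $g$, it is also the largest term of $\pi(g)$, so $\init(\pi(g))=\init(g)$.
\end{itemize}
Let $G'=\{g\in G: \pi(g)\neq 0\}$; by the above, $\init(g)$ is free of $X$-variables for every $g\in G'$. (A $g$ with $\init(g)$ free of $X$ cannot have $\pi(g)=0$.)

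Next we check the $S$-pairs of $\nf_X(G)\cup X$.
\begin{itemize}
\item For a pair $x,\pi(g)$ with $x\in X$: $\pi(g)$ contains no variable from $X$, so the leading terms $x$ and $\init(g)$ are coprime, and the $S$-pair reduces to $0$ by Buchberger's first criterion.
\item Pairs of variables in $X$ are trivial.
\item For $g,g'\in G'$: since $G$ is a Gröbner basis, there is a standard representation $S(g,g')=\sum_k q_kg_k$ with $g_k\in G$ and $\init(q_kg_k)<\operatorname{lcm}(\init g,\init g')$ for all nonzero terms.
\end{itemize}
In the last case, the monomial multipliers in $S(g,g')$ are built from $\init(g)$ and $\init(g')$, so they are free of $X$. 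Together with $\init(\pi(g))=\init(g)$ and the leading coefficients being unchanged, this gives $\pi(S(g,g'))=S(\pi(g),\pi(g'))$. Applying $\pi$ to the representation then yields
\[
S(\pi(g),\pi(g'))=\sum_k \pi(q_k)\pi(g_k).
\]
Here the terms with $g_k\in(X)$, or more generally with $\pi(g_k)=0$, vanish. For the remaining terms, $\pi(g_k)\in\nf_X(G)$, and $\init(\pi(q_k)\pi(g_k))\le\init(q_kg_k)$ whenever the product is nonzero: applying $\pi$ only deletes terms, and $\init(\pi(g_k))=\init(g_k)$. So this is again a standard representation with respect to $\nf_X(G)$, with every leading term below the lcm. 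By Buchberger's criterion, $\nf_X(G)\cup X$ is a Gröbner basis, and $X$ is a $G$-set.

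The main point needing care is the commutation $\pi(S(g,g'))=S(\pi(g),\pi(g'))$ together with the preservation of the inequality $\init(\pi(q_k)\pi(g_k))<\operatorname{lcm}(\init g,\init g')$. Both hinge on the strong $G$-set hypothesis. It forces every surviving Gröbner element to have an $X$-free leading term, and that is exactly what makes substitution compatible with leading terms. I would state this as a small sublemma before running Buchberger's criterion.
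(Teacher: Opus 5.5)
Your proof is correct, but it is organized differently from the paper's.

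The paper applies Buchberger's criterion to $G\cup X$ rather than to $\nf_X(G)\cup X$. The checks are as follows:
\begin{itemize}
\item Pairs within $G$ cost nothing, since $G$ is already a Gröbner basis.
\item A pair $(g,x)$ with $x\nmid\init(g)$ has coprime leading terms.
\item When $x\mid\init(g)$, the strong hypothesis puts every term of $g$ in $(X)$, so $S(g,x)$ reduces to zero modulo $X$.
\end{itemize}
It then invokes the standard fact that replacing members of a Gröbner basis by their normal forms modulo other members (here $X$) yields again a Gröbner basis. In that route, the strong hypothesis is used only once, in the mixed pairs.

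You instead verify the criterion directly on the reduced set $\nf_X(G)\cup X$. This forces you to handle the pairs $\pi(g),\pi(g')$ by pushing a standard (lcm-)representation of $S(g,g')$ through the substitution map $\pi$. You correctly identify what this requires: surviving $g$ have $X$-free initial terms with unchanged leading coefficients, so $\pi(S(g,g'))=S(\pi(g),\pi(g'))$, and applying $\pi$ can only lower initial terms.

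The paper's route is shorter. Yours is more self-contained, since it avoids the interreduction fact, and it makes the structure explicit: $\init(\pi(g))=\init(g)$ for every surviving $g$. It also shows that the pairs within $\nf_X(G)$ already have lcm-representations using $\nf_X(G)$ alone.
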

\begin{proof}
Let $G$ be a Gröbner basis, and let $X \subseteq \{x_1, \ldots , x_n \}$ satisfy  Definition \ref{def_strongG-set}. We want to prove that $\nf_X(G) \cup X$ is a Gröbner basis. We shall begin by proving that $G \cup X$ is a Gröbner basis. 

It is clear that the S-polynomial $S(g_1,g_2)$, for $g_1, g_2 \in G$, reduces to zero modulo $G$ as $G$ is a Gröbner basis. Also, $S(x,y)=0$ for any $x, y \in X$. Now consider the polynomial $S(g,x)$ for $g \in G$ and $x \in X$. If $x$ does not divide $\init(g)$ then $S(g,x)$ reduces to zero modulo $\{g,x\}$. If  $x$ does divide $\init(g)$ then, by Definition \ref{def_strongG-set}, each term of $g$ is divisible by a variable in $X$. Hence $S(g,x)$ reduces to zero modulo $X$. This proves that $G \cup X$ is a Gröbner basis. 
Reducing the members of $G$ modulo $X$ produces a new Gröbner basis. Hence $\nf_X(G) \cup X$ is a Gröbner basis. 
\end{proof}

\begin{exam}\label{ex_Gset}
Consider the degree reverse lexicographic term order on $S$ induced by $x_1>\cdots >x_n$. Let $G$ be a Gröbner basis of polynomials. For each $j=1,\ldots,n$, the set $\{x_j,x_{j+1},\ldots,x_n\}$ is a strong $G$-set, and hence also a $G$-sequence. This follows because if $x_p$ divides the initial term $\init(g)$ of some $g\in G$ then each term in $g-\init(g)$ must be divisible by some $x_q$ with $q\geq p$. Moreover, $\{x_{j-1}\}$ is a strong $G'$-set, where $G'=\nf_{\{x_j,\ldots,x_n\}}(G)\cup \{x_j,\ldots,x_n\}$ by similar reasoning.
\end{exam}

The following lemma is inspired by \cite[Lemma 12.1]{Sturmfels1996}, but here we avoid fixing a particular term order. 

\begin{lem}\label{lem_GB_of_colon}
Let $G$ be a Gröbner basis and $X$ be a $G$-set. If $x\not\in X$ is a variable such that $\{x\}$ is a strong $G'$-set, where $G'=\nf_X(G)\cup X$, then the set
\begin{equation}\label{eq_GB_for_colonideal}
\left\{f\in \nf_X(G) \ \Big\vert \:x\ndiv f\right\}\cup\left\{f/x \ \Big\vert\:  f\in \nf_X(G) \text{ and }x | f\right\} \cup X
\end{equation}
is a Gröbner basis for $(G\cup X):x$.
\end{lem}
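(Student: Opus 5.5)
Write $G'=\nf_X(G)\cup X$ and let $H$ denote the set in \eqref{eq_GB_for_colonideal}. Since $X$ is a $G$-set, $G'$ is a Gröbner basis of $(G\cup X)=(G')$. The plan is to adapt the argument of \cite[Lemma 12.1]{Sturmfels1996} to $G'$, using the strong $G'$-set hypothesis on $\{x\}$ in place of the reverse-lex property of the last variable.

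First, a reduction concerning $X$. Because $X$ is a reduced Gröbner basis of linear forms, every element of $\nf_X(G)$ is in normal form modulo $X$. So no term of such an element is divisible by $\init(\ell)$ for any $\ell\in X$. Since $x\notin X$, I would also like $x$ not to be divisible by, or equal to, any $\init(\ell)$. If $x=\init(\ell)$ for some $\ell\in X$, then $x$ is not a strong $G'$-set issue but rather $x\in\init(X)$; this case needs separate care. My first attempt is to note that the strong condition applied to $\ell\in G'$ with $x\mid\init(\ell)$ forces $\ell\in(x)$, hence $\ell=cx$, contradicting $x\notin X$ up to scaling. So $x$ divides no $\init(\ell)$, $\ell\in X$.

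Next, I show $H\subseteq (G'):x$ and that $H$ generates it.
\begin{itemize}
\item \emph{Containment.} Elements of $X$ and of $\nf_X(G)$ lie in $(G')$. For $f\in\nf_X(G)$ with $x\mid f$, we have $x\cdot(f/x)=f\in(G')$.
\item \emph{Polynomials divisible by $x$.} For $f\in\nf_X(G)$ with $x\mid\init(f)$, the strong $G'$-set hypothesis gives $f\in(x)$, so $f/x$ is a polynomial. Dividing by $x$ is exactly what happens to the initial term, so $\init(f/x)=\init(f)/x$.
\item \emph{Polynomials not divisible by $x$.} If $x\nmid f$, then $x\nmid\init(f)$ by the same hypothesis. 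For $\ell\in X$ we have $x\nmid\init(\ell)$ by the previous paragraph.
\end{itemize}

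The key step is the initial-ideal inclusion $\init((G'):x)\subseteq(\init(h))_{h\in H}$. Take $p$ with $xp\in(G')$. Then $x\init(p)=\init(xp)$ is divisible by $\init(g)$ for some $g\in G'$.
\begin{itemize}
\item If $x\nmid\init(g)$, then $\init(g)$ divides $\init(p)$, and $g\in H$, since $g\in X$ or $g\in\nf_X(G)$ with $x\nmid g$ by the strong property.
\item If $x\mid\init(g)$, then $g=xh$ with $h=g/x\in H$, and $\init(h)$ divides $\init(p)$.
\end{itemize}
Hence $\init(p)\in(\init(H))$. Together with $H\subseteq(G'):x$, this shows $H$ is a Gröbner basis of $(G'):x=(G\cup X):x$.

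The main obstacle I expect is the bookkeeping at the boundary between $X$ and $x$. This means justifying that $x$ cannot be divisible by or equal to an initial term of $X$, and confirming that ``$x\nmid f$'' in the statement agrees with ``$x\nmid\init(f)$''. Both should follow from applying the strong $G'$-set definition to all of $G'$, including the linear forms in $X$.
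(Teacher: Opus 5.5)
Your proposal is correct and is essentially the paper's proof. Both take $p$ with $xp\in(G')$, choose $g\in G'$ with $\init(g)\mid x\init(p)$, and use the strong $G'$-set hypothesis on $\{x\}$ to split into two cases: $x\nmid\init(g)$, where $g$ lies in your set $H$ and $\init(g)\mid\init(p)$, and $x\mid g$, where $\init(g/x)\mid\init(p)$. The inclusion $H\subseteq(G\cup X):x$ then supplies the reverse containment of initial ideals.

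Your preliminary check that $x$ cannot divide an element of $X$ (other than a scalar multiple of $x$, which $x\notin X$ excludes) is a point the paper leaves implicit. The paper uses it silently when it asserts that $f/x$ lies in the set for every $f\in G'$ with $x\mid f$.
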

\begin{proof}
Set $\fI=(G\cup X)$ and $G'=\nf_X(G)\cup X$.  Since $X$ is a $G$-set, $G'$ is a Gröbner basis for $\fI$.  Let $G''$ be the set \eqref{eq_GB_for_colonideal}. Observe that $G''\subseteq\fI:x$ as for any $g\in G''$ either $g \in \nf_X(G)\cup X$ or $xg \in \nf_X(G)$.

We proceed to show that $G''$ is a Gröbner basis for ${\fI:x}$.
Let $h\in \fI:x$.  
Thus $\init(x h)=x\init(h)$ is divisible by $\init(f)$ for some $f\in G'$.
Since $\{x\}$ is a strong $G'$-set, if $x|\init(f)$, then $f\in (x)$, that is, $x|f$. Thus if $x\ndiv f$, then $f\in G''$ and $x\ndiv \init(f)$ so $\init(f)|\init(h)$. Otherwise, if $x | f$, then $f/x \in G''$ and $\init(h)=\init(x h)/x$ is divisible by $\init(f)/x=\init(f/x)$. In both scenarios, $\init(h)\in (\init(g))_{g\in G''}$, thus $\init(\fI:x)\subseteq (\init(g))_{g\in G''}\subseteq \init(\fI:x)$ and equality follows.
\end{proof}
Knowing a Gröbner basis of $(G\cup X):x$ as in Lemma \ref{lem_GB_of_colon} allows us to prove the following theorem. 

\begin{thm}\label{thm_Gset}
Let $G$ be a quadratic Gröbner basis and $X$ be a $G$-set. If $x\not\in X$ is a variable such that $\{x\}$ is a strong $G'$-set, where $G'=\nf_X(G)\cup X$, then one has 
$(G\cup X):x=(G\cup X')$
where $X'$ is a $G$-set.
\end{thm}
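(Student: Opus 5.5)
Since $G$ is quadratic and $X$ consists of linear forms, every element of $\nf_X(G)$ is a homogeneous quadric (nonzero by definition of $\nf_X$). Lemma \ref{lem_GB_of_colon} gives the Gröbner basis \eqref{eq_GB_for_colonideal} for $(G\cup X):x$. Its members are of three kinds:
\begin{itemize}
\item the quadrics $f\in\nf_X(G)$ with $x\ndiv f$;
\item the linear forms $f/x$ with $f\in\nf_X(G)$ and $x\mid f$;
\item the linear forms in $X$.
\end{itemize}
Set
\[
X'=X\cup\{f/x \mid f\in \nf_X(G),\ x\mid f\}.
\]
Then $(G\cup X):x$ is generated by the set $\{f\in\nf_X(G)\mid x\ndiv f\}\cup X'$.

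The first step is to show that this ideal equals $(G\cup X')$. The inclusion $\supseteq$ holds because $G\subseteq (G\cup X):x$ and $X'\subseteq (G\cup X):x$. For $\subseteq$, each $f\in\nf_X(G)$ is congruent to an element of $G$ modulo $(X)\subseteq(X')$, so $f\in (G\cup X')$. Hence $(G\cup X):x=(G\cup X')$.

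The second step is to show that $X'$ is a $G$-set. The Gröbner basis from Lemma \ref{lem_GB_of_colon} has the form $Q\cup X'$, where $Q$ is a set of quadrics. Since the ideal is the same, $Q\cup X'$ is a Gröbner basis of $(G\cup X')$. I would then interreduce and prune.
\begin{itemize}
\item Replace $X'$ by a reduced Gröbner basis $Y$ of the linear ideal $(X')$. Its elements have distinct leading variables, and since $X$ is itself a Gröbner basis, $Y$ can be chosen to contain a suitable reduction of $X$. Then $Q\cup Y$ is still a Gröbner basis, because the initial ideal of $(Y)$ equals that of $(X')$.
\item Reduce $Q$ modulo $Y$. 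The result $\nf_Y(Q)\cup Y$ is again a Gröbner basis of the same ideal.
\end{itemize}
What is needed, however, is that $\nf_Y(G)\cup Y$ is a Gröbner basis.

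To get this, note that $\nf_Y(G)$ and $\nf_Y(Q)$ generate the same ideal modulo $(Y)$. Moreover, the elements of $\nf_Y(Q)$ are normal forms modulo $Y$ of elements of $\nf_X(G)$, and $\nf_Y(\nf_X(G))=\nf_Y(G)$ because $(X)\subseteq(Y)$. So $\nf_Y(G)\supseteq \nf_Y(Q)$. Every extra element $\nf_Y(f)$, coming from some $f$ with $x\mid f$, lies in the ideal $(\nf_Y(Q)\cup Y)$. Adding ideal elements to a Gröbner basis keeps it a Gröbner basis, so $\nf_Y(G)\cup Y$ is a Gröbner basis. Hence $Y$ is a $G$-set, and we may take $X'=Y$.

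The main obstacle is this bookkeeping around normal forms. The theorem asks for $X'$ to be a $G$-set, and in particular $X'$ itself must be a Gröbner basis, meaning the linear forms have distinct leading terms. This is why $X\cup\{f/x\}$ must be replaced by a reduced Gröbner basis $Y$ of the same span. I also need to check that normal forms modulo $Y$ are compatible with those modulo $X$. I expect this to be routine because normal forms modulo a Gröbner basis of linear forms are just linear substitutions.
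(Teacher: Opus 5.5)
Your proof is correct. It uses the same ingredients as the paper's proof:
\begin{itemize}
\item the Gr\"obner basis from Lemma~\ref{lem_GB_of_colon};
\item the fact that a subset of an ideal containing a Gr\"obner basis of it is again a Gr\"obner basis;
\item reduction of a Gr\"obner basis modulo its linear members;
\item the identity $\nf_Y(\nf_X(g))=\nf_Y(g)$.
\end{itemize}

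The one real difference is your detour through a reduced basis $Y$, and it rests on a misreading. A Gr\"obner basis of linear forms need not have pairwise distinct leading terms; for example, $\{x_1,\,x_1+x_2,\,x_2\}$ with $x_1>x_2$ is one. That property characterizes reduced Gr\"obner bases, which is all the remark before Definition~\ref{dfn_Gset} concerns.

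The paper keeps $X'=X\cup\{f/x\mid f\in\nf_X(G),\ x\mid f\}$ itself and argues as follows.
\begin{itemize}
\item $G'\cup X'$ contains $G''$ and generates the same ideal $(G\cup X):x$, so it is a Gr\"obner basis.
\item $X'$ is exactly the set of linear members of this homogeneous Gr\"obner basis. Hence $X'$ is a Gr\"obner basis of $(X')$, because the degree-one part of the initial ideal is spanned by the leading terms of those linear members.
\item Reducing $G'$ modulo $X'$ gives the Gr\"obner basis $\nf_{X'}(G)\cup X'$, using $\nf_{X'}(G')=\nf_{X'}(G)$ since $X\subseteq X'$.
\end{itemize}

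So the paper adds first and reduces second. You reduce first and then add back the elements $\nf_Y(f)$ with $x\mid f$.

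Your ordering avoids the step ``the linear part of a homogeneous Gr\"obner basis is a Gr\"obner basis.'' However, it only produces some basis $Y$ of the span of $X'$. That suffices for the statement as written, since the statement is existential and $(G\cup Y)=(G\cup X')$. The paper's route shows more: the explicit set $X'$ is itself a $G$-set. This is the form that reappears in Theorem~\ref{thm_strong_colon} and in recovering the Ene--Herzog--Hibi linearity result.
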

\begin{proof}
By definition, the set $G'=\nf_X(G) \cup X$ is a Gröbner basis for $(G \cup X)$,  and by Lemma \ref{lem_GB_of_colon}
\[
G''=\left\{f\in \nf_X(G) \ \Big\vert \:x\ndiv f\right\}\cup\left\{f/x \ \Big\vert\:  f\in \nf_X(G) \text{ and }x | f\right\} \cup X
\]
is a Gröbner basis for $(G\cup X):x$. We need to prove that the set
\[
X'\colonequals \left\{f/x \ \Big\vert\:  f\in \nf_X(G) \text{ and }x | f\right\} \cup X
\]
is a $G$-set. Since $X$ is a set of linear forms, and $G$ is a set of quadratic forms, $X'$ consists of linear forms. Observe that $(G'')=(G' \cup X')$ and that $G' \cup X' \supseteq G''$. In general, if $H_1$ is a Gröbner basis and $H_2 \supseteq H_1$ generates the same ideal, then $H_2$ is also a Gröbner basis. Since $G''$ is a Gröbner basis, so is $G' \cup X'$.  The set $X'$ is also a Gröbner basis, as it is the set of all linear elements of the Gröbner basis $G'\cup X'$. Reducing the members of $G'$ modulo $X'$ produces a new Gröbner basis. Since $X \subseteq  X'$ we have $\nf_{X'}(G') = \nf_{X'}(G)$, and we have proved that $\nf_{X'}(G) \cup X'$ is a Gröbner basis.
\end{proof}

Theorem 1.1 in \cite{Ene_2015} states that if $G$ is a quadratic Gröbner basis w.\,r.\,t.\  the degree reverse lexicographic term order then the ideals $(G \cup \{x_{j}, \ldots, x_n\}):x_{j-1}$ are generated by $G$ together with a set of linear forms. We saw in Example \ref{ex_Gsequence} that the sets $\{ x_{j}, \ldots, x_n\}$ are (strong) $G$-sets and $\{x_{j-1}\}$ is a strong $G'$-set where $G'=\nf_{\{x_j,\ldots,x_n\}}(G)\cup \{x_j,\ldots,x_n\}$, so \cite[Theorem 1.1]{Ene_2015} is now recovered as a corollary of Theorem \ref{thm_Gset}. The authors also give a more precise description of the set of linear forms obtained in the case $G$ is a reduced Gröbner basis of binomials with terms of disjoint support, see \cite[Theorem 1.3]{Ene_2015}. We proceed with a generalization of that result. 

\begin{thm}\label{thm_strong_colon}
Let $G$ be a reduced Gröbner basis of quadratic binomials having terms of disjoint support. Let $X$ and $X \cup \{x \}$ be strong $G$-sets with $x\not\in X$. Then ${(G\cup X):x}=(G\cup X')$
where  
\[
X'= \left\{ \init(g)/x \ \Big| \ g \in G \ \text{and } x|\init(g)  \right\} \cup X
\]
is a strong $G$-set. 
\end{thm}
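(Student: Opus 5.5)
The plan is to reduce to Theorem \ref{thm_Gset} with the $G$-set $X$, identify the resulting set of linear forms with $X'$, and then verify separately that $X'$ is a strong $G$-set. By Lemma \ref{lem_strong_is_Gset}, $X$ is a $G$-set. Put $G'=\nf_X(G)\cup X$. The first task is to check that $\{x\}$ is a strong $G'$-set, so that Theorem \ref{thm_Gset} applies.

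Write each $g\in G$ as $g=u-v$ with $u=\init(g)$, where $u$ and $v$ are monomials with $\gcd(u,v)=1$. Since $X$ consists of variables, $\nf_X(g)$ is obtained from $g$ by deleting every term divisible by a variable of $X$. If a variable of $X$ divides $u$, then strongness of $X$ gives $g\in(X)$. Because $(X)$ is a monomial ideal, both $u$ and $v$ then lie in $(X)$, so $\nf_X(g)=0$. Otherwise $u$ survives, and $\init(\nf_X(g))=u$.

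Now suppose $x\mid u$. Strongness of $X\cup\{x\}$ gives $v\in(X\cup\{x\})$. Since $u$ and $v$ have disjoint support, $x\nmid v$, so some variable of $X$ divides $v$. Hence $\nf_X(g)=u$, which is divisible by $x$. This shows that every element of $G'$ whose initial term is divisible by $x$ is itself divisible by $x$, so $\{x\}$ is a strong $G'$-set.

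The same case analysis identifies the elements $f\in\nf_X(G)$ with $x\mid f$. A two-term $f=u-v$ cannot be divisible by $x$, because $u$ and $v$ have disjoint support. So $x\mid f$ forces $f=u=\init(g)$ with $x\mid\init(g)$. The case $f=-v$ with $x\mid v$ does not occur, since then $u$ has been deleted and so $g\in(X)$. Conversely, every $g$ with $x\mid\init(g)$ either has $\nf_X(g)=\init(g)$, or has $\init(g)=xy$ with $y\in X$, and in the latter case $\init(g)/x=y$ is already in $X$. Theorem \ref{thm_Gset} therefore yields $(G\cup X):x=(G\cup X')$, with $X'$ exactly as in the statement. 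Since $G$ is quadratic, $\init(g)/x$ is a variable, so $X'$ is a set of variables.

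The main obstacle is showing that $X'$ is a \emph{strong} $G$-set. For $y\in X$ this is inherited from $X$. So take $y\in X'\setminus X$ with $xy=\init(g)$ and $g=xy-v$, where $v\in(X)$ as shown above, and take $g'=yz-w\in G$ with $y\mid\init(g')$. We must show $g'\in(X')$; it suffices to show $w\in(X')$, since then $yz-w\in(X')$ because $y\in X'$.

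The key computation is
\[
zg-xg'=xw-zv\in(G).
\]
Since $v\in(X)$, this gives $xw\in(G\cup X)$, and hence $w\in(G\cup X):x=(G\cup X')$. By Theorem \ref{thm_Gset}, $X'$ is a $G$-set, so $\nf_{X'}(G)\cup X'$ is a Gröbner basis of this ideal. If no variable of $X'$ divides the monomial $w$, then $w$ must be divisible by the initial term of some nonzero $\nf_{X'}(g'')$ with $g''\in G$, and by degree that initial term equals $w$.

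I would aim to derive a contradiction with reducedness of $G$, which says that $w$, a non-initial term of $g'$, is not divisible by $\init(g'')$. The delicate point is that $\init(\nf_{X'}(g''))$ could be the trailing term of $g''$ if $\init(g'')$ was deleted by $X'$. I would handle this by checking whether $\init(g'')\in(X')$ forces the trailing term into $(X')$ as well. This would mimic the argument for $X$, using the construction of $X'$ from initial terms together with the disjoint support of the binomials. If that fails, I would fall back on a more direct S-polynomial argument between $g$, $g'$ and $g''$.
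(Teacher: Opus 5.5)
\textbf{There is a genuine gap in the final step.} Everything up to and including the identity $zg-xg'=xw-zv$ is correct and essentially matches the paper. This covers your check that $\{x\}$ is a strong $G'$-set, your identification of the linear forms produced by Theorem~\ref{thm_Gset} with the $X'$ of the statement (you are more careful than the paper about those $g$ with $\init(g)=xy$, $y\in X$), and the conclusion $w\in(G\cup X):x$. The problem is the last step, and the route you propose for it cannot work.

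Since $y\in X'$ and $g'=yz-w\in G$, you have $w=yz-g'\in(G\cup X')$ automatically. So the membership $w\in(G\cup X):x=(G\cup X')$ says nothing, by itself, about whether $w\in(X')$. Concretely, if $w\notin(X')$ then $\nf_{X'}(g')=-w$ is itself an element of the Gröbner basis $\nf_{X'}(G)\cup X'$. The $g''$ you would find can therefore be $g'$ itself, and reducedness gives no contradiction. Your proposed fix is to show that $\init(g'')\in(X')$ forces the trailing term of $g''$ into $(X')$. That is exactly the strongness of $X'$ you are trying to prove, so the argument is circular. The vague S-polynomial fallback does not supply the missing ingredient either.

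\textbf{The missing idea.} Test $w$ against a Gröbner basis of $(G\cup X):x$ that does not presuppose anything about $X'$. This is the basis from Lemma~\ref{lem_GB_of_colon}, i.e.\ the set $G''$ in the proof of Theorem~\ref{thm_Gset}. By routing through the statement of Theorem~\ref{thm_Gset} and then switching to $\nf_{X'}(G)\cup X'$, you discarded exactly this information.

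Your own case analysis shows that every nonzero $\nf_X(g)$ has initial term $\init(g)$, since a trailing term never survives alone. Hence the elements of $G''$ are:
\begin{itemize}
\item the $\nf_X(g)$ with $x\ndiv \nf_X(g)$, whose initial terms are the corresponding $\init(g)$;
\item the variables of $X'$.
\end{itemize}
So $\init((G\cup X):x)$ is generated by certain $\init(g)$ with $g\in G$, together with the variables of $X'$.

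The monomial $w$ lies in this initial ideal. It is not divisible by $\init(g'')$ for any $g''\neq g'$, by reducedness of $G$. It is not divisible by $\init(g')$ either, since that is a different monomial of the same degree. Hence some variable of $X'$ divides $w$, which gives $g'\in(X')$. This is precisely how the paper concludes: it reads off the initial terms $u_1,\dots,u_r$, $u_j$ ($j\in\Gamma$), and the variables of $X'$ from its explicit Gröbner basis of the colon ideal.
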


\begin{proof}
Write $G=\{u_1-v_1,\ldots,u_t-v_t\}$, where $\init(u_i-v_i)=u_i$ for each $i=1,\ldots,t$. Note that some of the $v_i$ may be equal to $0$. Since $X$ is a strong $G$-set we may reorder the elements of $G$ so that there are integers $r\leq s\leq t$ such that $v_i$ is not divisible by any element of $X$ if and only if $1\leq i \leq r$, and $u_j$ is not divisible by any element of $X$ if and only if $1\leq j\leq s$. That is, the normal form $\nf_{X}(u_i-v_i)$ equals $u_i-v_i$ for $1\leq i \leq r$, equals $u_i$ for $r+1\leq i\leq s$, and equals $0$ for $i>s$. The set
$$G'=\{u_1-v_1,\ldots,u_r-v_r,u_{r+1},\ldots,u_s\} \cup X$$
is thus a Gröbner basis for $(G\cup X)$ by Lemma \ref{lem_strong_is_Gset} and the definition of $G$-set. Observe also that $\{x\}$ is a strong $G'$-set: Suppose $f=\nf_X(g)$ for some $g\in G$ and that $x|\init(f)$. Either $\init(f)\not=\init(g)$, in which case $\init(g)$ was divisible by some variable of $X$, or $\init(f)=\init(g)$. In either case, since $X\cup \{x\}$ is a strong $G$-set, one has $g\in (X\cup \{x\})$ hence $f\in (x)$.
Using this, along with Lemmas \ref{lem_strong_is_Gset} and \ref{lem_GB_of_colon}, it follows that there is an equality
\begin{equation}\label{colon}
    (G\cup X):x=( \{ u_1-v_1,\ldots,u_r-v_r \} \cup \{u_j\}_{j\in \Gamma} \cup \{u_j/x\}_{j\in \Lambda} \cup X),
\end{equation}
where $\Gamma=\{j\mid r+1\leq j\leq s\text{ and } x\ndiv u_j\}$ and $\Lambda=\{j\mid r+1\leq j\leq s\text{ and } x| u_j\}$, and the generating set on the right is a Gröbner basis. 

Set $X'\colonequals X\cup \{u_j/x\}_{j\in \Lambda}$, and note that $(G\cup X):x=(G\cup X')$. It remains to show that $X'$ is a strong $G$-set. Suppose that $y\in X'$ divides some $u_i$. We want to show that $v_i$ is divisible by an element of $X'$. If $y\in X$ there must exist $z\in X\subseteq X'$ such that $z|v_i$ since $X$ is a strong $G$-set. Suppose now that $y = u_j/x$ for some $j\in \Lambda$. If $i=j$, then the fact that $x$ divides $u_i$ implies that there exists $z\in X$ such that $z|v_i$, since $X \cup \{x\}$ is a strong $G$-set and $u_i$ and $v_i$ have disjoint support.  
If instead $i\neq j$, then $u_j$ and $u_i$ share a factor $y$ and their respective binomials can be written as
$$
\left\{\begin{array}{ccc}
   u_i-v_i&\!\!=\!\!& zy-v_i\\
   u_j-v_j&\!\!=\!\!&xy-v_j
\end{array}\right.
$$
where $v_j$ is divisible by some variable in $X$ (since $r+1\leq j\leq s$). Note that the second equality thus implies $xy\in (G\cup X)$. It now follows from the equality
$$x(u_i-v_i)=xzy-xv_i$$
that $xv_i\in (G\cup X)$, and so $v_i\in (G\cup X):x$. We know that $v_i$ is not divisible by $u_k$ for $k=1,\ldots,s$ since $G$ is a reduced Gröbner basis. From \eqref{colon}, we conclude that $v_i$ is divisible by one of $\{u_j/x\}_{j\in\Lambda} \cup X=X'$, so $X'$ is a strong $G$-set.
\end{proof}

Say that a set is a \emph{strong $G$-sequence} if it is a $G$-sequence consisting of strong $G$-sets. In a situation where every strong $G$-set is a strong $G$-sequence, then Theorem \ref{thm_strong_colon} can be used to produce a Koszul filtration of $S/(G)$. 
We end the section, however, by pointing out that even if $X$ and $X\cup \{x\}$ are strong $G$-sequences, in general the resulting set $X'$ in Theorem \ref{thm_strong_colon} need not be a strong $G$-sequence.
\begin{exam}
Let $S=\Q[x,y_1,y_2,z_1,\ldots,z_4,t]$ and consider
$$G=\{xy_1-z_3t,\, xy_2-z_4t,\, y_1z_1-y_2z_2,\, y_2z_3-y_1z_4,\, z_1z_3-z_2z_4\}.$$ 
This is a reduced quadratic Gröbner basis in the degree reverse lexicographic order such that $x>y_1>y_2>z_1>\cdots>z_4>t$. Indeed, the ideal $(G)$ is even toric, and in particular the generators are binomials with terms of disjoint support.

The sets $\{x,t\}$ and $\{t\}$ are (strong) $G$-sets, hence (strong) $G$-sequences. However, one has $(G \cup \{t\}):x = (G \cup \{y_1, y_2, t\})$, but $\{y_1, y_2, t\}$ is not a strong $G$-sequence, nor even a $G$-sequence.

We remark, however, that $R=S/(G)$ does have a monomial Koszul filtration. 
\end{exam}

In order to use Theorem \ref{thm_strong_colon} for constructing Koszul filtrations we therefore need to restrict to some particular family $\cG$ of strong $G$-sequences for which the new $G$-sets $X'$ produced by Theorem \ref{thm_strong_colon} also belong to $\cG$. In the next section we illustrate this idea on two classes of algebras.

\section{Koszul filtrations for some binomial ideals}\label{sec_binomial}
\noindent
We now apply the results of the previous section to show that G-quadraticity implies the existence of a Koszul filtration for algebras defined by binomial edge ideals and by generic determinantal ideals.

Consider a matrix 
$$\X=\begin{pmatrix} x_{11} & x_{12} & \cdots & x_{1n} \\ \vdots &&&\vdots \\ x_{m1} & x_{m2} & \cdots & x_{mn}\end{pmatrix}$$
where each entry is a variable. Let $S=\K[\X]$ be the polynomial ring in the $mn$ variable entries of $\X$. A \emph{$t$-minor} of $\X$ is a determinant of a $t \times t$ submatrix. 
A term order on $S$ is called \emph{diagonal} if the initial term of any $t$-minor is the product of the main diagonal entries of the corresponding $t\times t$ submatrix. This can be obtained for instance by taking the degree reverse lexicographic order with
$$x_{m1}>\cdots >x_{21}>x_{11} > x_{m2}>\cdots > x_{22}>x_{12}>\cdots> x_{1n},$$
or equivalently, $x_{ij}>x_{kl}$ if $j<l$, or $j=l$ and $i>k$. For the remainder of this section we fix this term order. 

Let $G$ be a Gröbner basis consisting of some subset of the $2$-minors of $\X$. Let $\cG$ be the family (depending on $G$) of sets $X$ of variables such that
\begin{equation}\label{det_G-sets}
    \text{
    if} \ \begin{vmatrix}
        x_{ik} & x_{i \ell} \\
        x_{jk} & x_{j \ell}
    \end{vmatrix} \in G \text{ and $x_{ik}$ or $x_{j\ell}$ belongs to $X$ then $x_{i\ell}\in X$}
\end{equation}
where $i<j$ and $k<\ell$. Note that each such set $X$ is a strong $G$-set.

\begin{lem}\label{lem_det_G-seq}
    If $X\in \cG$ and $x$ is the largest member of $X$ then $X \setminus \{x\} \in \cG$.
\end{lem}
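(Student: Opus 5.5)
We must check that $Y = X\setminus\{x\}$ satisfies condition \eqref{det_G-sets}. Take a $2$-minor in $G$ with rows $i<j$ and columns $k<\ell$, and suppose $x_{ik}\in Y$ or $x_{j\ell}\in Y$. Since $Y\subseteq X$ and $X\in\cG$, we get $x_{i\ell}\in X$. So the only thing that can fail is $x_{i\ell}=x$; the plan is to show this is impossible. We will do so by exhibiting an element of $X$ strictly larger than $x_{i\ell}$, which contradicts $x_{i\ell}=x$ being the largest member of $X$.

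The key step is to compare $x_{i\ell}$ with $x_{ik}$ and $x_{j\ell}$ in the fixed diagonal order. Recall that $x_{ab}>x_{cd}$ if $b<d$, or if $b=d$ and $a>c$.
\begin{itemize}
\item Since $k<\ell$, we have $x_{ik}>x_{i\ell}$.
\item Since $j>i$ and the column is the same, we have $x_{j\ell}>x_{i\ell}$.
\end{itemize}
Whichever of $x_{ik}$ or $x_{j\ell}$ lies in $Y\subseteq X$ is therefore an element of $X$ strictly larger than $x_{i\ell}$. Hence $x_{i\ell}\neq x$, so $x_{i\ell}\in X\setminus\{x\}=Y$. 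This proves $Y\in\cG$.

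There is no real obstacle here. The only point needing care is to check the direction of the variable order on both the row index and the column index. In fact the argument shows more: in each such minor, $x_{i\ell}$ is the smallest of its three relevant entries, so removing the maximum from $X$ never violates \eqref{det_G-sets}.
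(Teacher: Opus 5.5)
Your proof is correct and follows the paper's argument: the paper also observes that $x_{i\ell}$ is smaller than both $x_{ik}$ and $x_{j\ell}$ in the fixed order, so removing the largest element of $X$ can never remove the required $x_{i\ell}$. You have just spelled out the index comparisons that the paper's one-line proof leaves implicit.
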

\begin{proof}
    Note that in \eqref{det_G-sets} $x_{i\ell}$ is smaller than both $x_{ik}$ and $x_{j\ell}$. Hence \eqref{det_G-sets} still holds after removing the largest element of the set $X$. 
\end{proof}

Let $I_2(\X)$ denote the ideal generated by all $2$-minors of $\X$. The ideal $I_2(\X)$ is binomial and prime \cite[Proposition 4.1.3]{BCR2022} (and hence toric if $\K$ is algebraically closed). The $2$-minors form a (quadratic) Gröbner basis for $I_2(\X)$ under any diagonal term order \cite[Theorem 4.1.1]{BCR2022}, and thus the determinantal ring $S/I_2(\X)$ is Koszul \cite[Theorem 4.1.2]{BCR2022}.  In fact, we can now show:

\begin{thm}\label{thm_GDI}
The determinantal ring $S/I_2(\X)$ has a monomial Koszul filtration.
\end{thm}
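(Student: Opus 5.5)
The plan is to take $G$ to be the set of all $2$-minors of $\X$, which is a reduced quadratic Gröbner basis of binomials with terms of disjoint support under the fixed diagonal order, and to use as filtration
\[
\cF=\{(X)R \mid X\in\cG\},
\]
the ideals of $R=S/I_2(\X)$ generated by the sets of variables in the family $\cG$ from \eqref{det_G-sets}. Property \tagKFlin\ holds by construction. For \tagKFmax, the set of all variables trivially satisfies \eqref{det_G-sets}, so $\m_R\in\cF$. All the work is in \tagKFcol.

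For \tagKFcol, take a nonzero $(X)R$ with $X\in\cG$. Let $x$ be the largest variable of $X$ and set $Y=X\setminus\{x\}$; by Lemma \ref{lem_det_G-seq}, $Y\in\cG$. Then $(X)R/(Y)R$ is cyclic, generated by the image of $x$. Both $Y$ and $Y\cup\{x\}=X$ are strong $G$-sets, as noted after \eqref{det_G-sets}, so Theorem \ref{thm_strong_colon} applies. It gives $(G\cup Y):x=(G\cup Y')$ with
\[
Y'=Y\cup\{\init(g)/x \mid g\in G,\ x\mid \init(g)\}.
\]
Passing to $R$, we get $(Y)R:(X)R=(Y)R:x=(Y')R$. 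It therefore suffices to show $Y'\in\cG$.

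The elements $\init(g)/x$ are easy to describe. Write $x=x_{pq}$. Since $\init$ of a $2$-minor $x_{ik}x_{j\ell}-x_{i\ell}x_{jk}$ with $i<j$, $k<\ell$ is $x_{ik}x_{j\ell}$, the new variables are
\[
N_1=\{x_{j\ell} \mid j>p,\ \ell>q\},\qquad N_2=\{x_{ik}\mid i<p,\ k<q\},
\]
the south-east and north-west regions relative to $x$. So $Y'=Y\cup N_1\cup N_2$, and we must check \eqref{det_G-sets} for $Y'$. Take a minor on rows $i<j$ and columns $k<\ell$ with $x_{ik}$ or $x_{j\ell}$ in $Y'$; we need $x_{i\ell}\in Y'$. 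If the variable is in $Y$, this follows from $Y\in\cG$. If $x_{j\ell}\in N_1$, then $j>p$ and $\ell>q$. When $i>p$, $x_{i\ell}\in N_1$ directly. When $i\le p$, we use that $x_{pq}\in X\in\cG$ gives $x_{p\ell}\in X$ via the minor on rows $p<j$, columns $q<\ell$, and then chain further using maximality of $x$. The case $x_{ik}\in N_2$ is symmetric.

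The main obstacle is these boundary cases: showing $x_{i\ell}\in Y$ (not merely in $X$, and different from $x$) when the corner leaves the region $N_1$ or $N_2$. Here maximality of $x$ in the order ($x_{ij}>x_{kl}$ if $j<l$, or $j=l$ and $i>k$) must be used. For instance, any variable of $X$ in a column to the left of $q$ is impossible, and so is one in column $q$ below row $p$. This should force the relevant $x_{i\ell}$ to lie in column $\ell>q$, hence to differ from $x$ and to lie in $Y$ by the closure property of $X$. If a direct case check fails, I would instead argue that \eqref{det_G-sets} for $X$ together with maximality forces $X$ to contain the whole region to the north-east of $x$, and verify closure of $Y'$ from that description.
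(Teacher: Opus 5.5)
Your proposal is essentially the paper's proof: the same family $\cG$, the same removal of the largest variable via Lemma~\ref{lem_det_G-seq}, and the same appeal to Theorem~\ref{thm_strong_colon}. Your worked case ($x_{j\ell}\in N_1$, $i\le p$) is the paper's case (3), and the case $x_{ik}\in N_2$ goes through the analogous chain $x_{pq}\Rightarrow x_{iq}\Rightarrow x_{i\ell}$, as in the paper's case (2).

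The two cases you did not list, $x_{ik}\in N_1$ and $x_{j\ell}\in N_2$, are immediate, since then $x_{i\ell}$ itself lies in $N_1$ resp.\ $N_2$. Your ``main obstacle'' is not really one, because $x_{i\ell}\neq x$ holds automatically: $\ell>q$ in the $N_1$ case, and $i<p$ in the $N_2$ case (where $\ell=q$ can occur, contrary to your heuristic). So maximality is needed only to get $Y\in\cG$. Your fallback claim that $X$ contains the whole north-east region of $x$ is unnecessary, and it is false as stated when $x$ lies in the bottom row or the first column.
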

\begin{proof}
    Let $G$ be the Gröbner basis consisting of all $2$-minors of $\X$. We shall prove that the ideals of $R=S/I_2(\X)$ generated by the canonical images of sets in $\cG$ form a Koszul filtration. It is clear that the ideals satisfy \tagKFlin, and since  $\cG$ contains the set of all variables \tagKFmax \ holds as well. Further, let $X\in \cG$ and let $X'=X \setminus \{x\}$ where $x$ is the largest member of $X$. We shall prove that $(G \cup X'):x=(G \cup X'')$ where $X'' \in \cG$. This together with Lemma \ref{lem_det_G-seq} then shows that the last condition \tagKFcol \ holds. 

   Let $x_{ik}$ be the largest member of $X$, and $X'=X\setminus\{x_{ik}\}$. By Theorem \ref{thm_strong_colon}
   \[
X''= \left\{ \init(g)/x_{ik} \ \Big| \ g \in G \ \text{and } x_{ik}|\init(g)  \right\} \cup X'.
\]
We need to prove that this set satisfies \eqref{det_G-sets}. By Lemma \ref{lem_det_G-seq} \eqref{det_G-sets} holds for the subset $X'$. Consider therefore $x_{j \ell}=\init(g)/x_{ik}$ for $g \in G$. That is $g=x_{ik}x_{j \ell}-x_{i \ell}x_{jk}$ is a $2$-minor. Now assume $x_{j \ell}$ appears on the main diagonal of another $2 \times 2$ submatrix. Then we have a $2$-minor $h=x_{j \ell}x_{rs}-x_{js}x_{r \ell}$ with initial term $x_{j \ell}x_{rs}$. We now need to consider four cases depending on where on the main diagonal $x_{j \ell}$ appears on the two submatrices corresponding to $g$ and $h$.
\begin{enumerate}
    \item Suppose $i<j$ and $k< \ell$ together with $j<r$ and $\ell<s$. In this case we want to prove that $x_{js} \in X''$. Since $i<j$ and $k<s$ the minor given by the submatrix of rows $i,j$ and columns $k,s$ have leading term $x_{ik}x_{js}$. This shows that  $x_{js} \in X''$.
    \item Suppose $i>j$ and $k> \ell$ together with $j<r$ and $\ell<s$. We want to prove that $x_{js} \in X''$. If $k>s$ then, since $i>j$, the term $x_{js}x_{ik}$ is the lead term of a $2$-minor, and hence $x_{js} \in X''$. Assume instead that $k \le s$. Considering the $2$-minor
    \[
    g=\begin{vmatrix}
        x_{j \ell} & x_{j k} \\
        x_{i \ell} & x_{ik}
    \end{vmatrix} 
    \]
    and that $x_{ik} \in X$ gives that $x_{jk} \in X'$. If $k=s$ we have $x_{js} \in X' \subseteq X''$. If $k<s$ we consider the $2$-minor
    \[
    \begin{vmatrix}
        x_{j k} & x_{j s} \\
        x_{i k} & x_{is}
    \end{vmatrix}.
    \]
    Since $x_{jk} \in X'$ it follows that $x_{js} \in X' \subseteq X''$.
    \item Suppose $i<j$ and $k< \ell$ together with $j>r$ and $\ell>s$. In this case we want to prove that $x_{r \ell} \in X''$. If $r>i$ then, since $k< \ell$, the term $x_{r \ell}x_{ik}$ is a valid lead term of a $2$-minor, and $x_{r \ell} \in X''$. Assume $r \le i$ and consider the submatrix
    \[
    \begin{pmatrix}
        x_{rk} & x_{r \ell} \\
        x_{ik} & x_{i \ell} \\
        x_{jk} & x_{j \ell}
    \end{pmatrix} \quad \text{(where possibly the rows $r$ and $i$ coincide).}
    \]
    Since $x_{ik} \in X$ we have $x_{i \ell} \in X'$, and then in turn $x_{r \ell} \in X' \subseteq X''$. 
    \item Last, suppose $i>j$ and $k> \ell$ together with $j>r$ and $\ell>s$. We want to prove that $x_{r \ell} \in X''$. Since $r<i$ and $\ell <k$ we have $x_{r \ell}x_{ik}$ as a leading term of a $2$-minor, and hence $x_{r \ell} \in X''$. 
\end{enumerate}
We have now proved that $X'' \in \cG$ which completes the proof. 
\end{proof}

We now turn to binomial edge ideals. For this application we consider the matrix
\[
\X=\begin{pmatrix} x_1 & x_2 & \dots & x_n \\ y_1 & y_2 & \dots & y_n  \end{pmatrix}.
\]
Let $\Gamma$ be a graph with vertex set $\{1, \ldots, n\}$ and edge set $E(\Gamma)$. The \emph{binomial edge ideal} $I_\Gamma$ is the ideal of the ring $S=\K[\X]$ generated by the set $G_\Gamma$ of $2$-minors of $\X$ given by columns $i$ and $j$ such that $\{i,j\} \in E(\Gamma)$. Adopting the terminology from  \cite{HHHKR2010}, the graph $\Gamma$ is closed if whenever $\{i,j\}, \{i,k\} \in E(\Gamma)$ and $\{j,k\} \notin E(\Gamma)$ either $j<i<k$ or $j>i>k$. It was proved in \cite{HHHKR2010} that the graph $\Gamma$ is closed if and only if the set $G_\Gamma$ is a Gröbner basis w.\,r.\,t.\ the lexicographic term order with $x_1> \dots > x_n>y_1> \dots > y_n$. Notice that this is a diagonal term order. Choosing any other diagonal term order does not affect the initial ideal, and hence $G_\Gamma$ is a Gröbner basis  w.\,r.\,t.\ our degree reverse lexicographic order if and only if $\Gamma$ is closed.
Further, it was proved in \cite{CR2011} that $\Gamma$ is closed if and only if $G_\Gamma$ is a quadratic Gröbner basis w.\,r.\,t.\ some term order.

A complete classification of binomial edge ideals that define Koszul algebras is given in \cite{LMMP26}. 
We can now apply Theorem \ref{thm_strong_colon} to show that having a quadratic Gröbner basis implies the existence of a Koszul filtration when it comes to binomial edge ideals. 

\begin{thm}\label{thm_BEI}
    If a binomial edge ideal $I_\Gamma$ has a quadratic Gröbner basis then the ring $S/I_\Gamma$ has a monomial Koszul filtration. 
\end{thm}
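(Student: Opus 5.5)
By \cite{CR2011}, if $I_\Gamma$ has a quadratic Gröbner basis with respect to some term order, then $\Gamma$ is closed. Hence $G=G_\Gamma$ is a reduced quadratic Gröbner basis with respect to our fixed diagonal degree reverse lexicographic order. Its elements are binomials $x_ky_l-x_ly_k$, one for each edge $\{k,l\}$ with $k<l$, with initial term $x_ky_l$, so the two terms have disjoint support. I would copy the proof of Theorem \ref{thm_GDI} verbatim at the formal level. Take the family $\cG$ from \eqref{det_G-sets} for this $G$. Here that condition reads: for every edge $\{k,l\}$ with $k<l$, if $x_k\in X$ or $y_l\in X$, then $x_l\in X$. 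Each such $X$ is a strong $G$-set. Lemma \ref{lem_det_G-seq} was stated for an arbitrary Gröbner basis consisting of $2$-minors, so it applies here. Consequently \tagKFlin\ and \tagKFmax\ hold, and \tagKFcol\ reduces to one claim. Let $X\in\cG$ have largest element $x$ and put $X'=X\setminus\{x\}$. Then the set $X''$ produced by Theorem \ref{thm_strong_colon}, which satisfies $(G\cup X'):x=(G\cup X'')$, must again lie in $\cG$.

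For this I would split according to whether $x=x_k$ or $x=y_k$. Recall the order $y_1>x_1>y_2>x_2>\cdots$. By Lemma \ref{lem_det_G-seq} the subset $X'$ already satisfies \eqref{det_G-sets}, so only the new elements $\init(g)/x$ need to be checked.

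If $x=x_k$, the new elements are the $y_l$ with $\{k,l\}\in E(\Gamma)$ and $k<l$. Such a $y_l$ lies on a diagonal only in a minor with columns $s<l$, and then we need $x_l\in X''$. This already holds: $x_k\in X$ together with the edge $\{k,l\}$ forces $x_l\in X'$.

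If $x=y_k$, the new elements are the $x_j$ with $\{j,k\}\in E(\Gamma)$ and $j<k$. For each edge $\{j,s\}$ with $j<s$ we must show $x_s\in X''$. When $s=k$ this follows from $y_k\in X$. When $s<k$, the edges $\{j,s\},\{j,k\}$ with $j$ smallest give $\{s,k\}\in E(\Gamma)$ by closedness. Then $x_sy_k$ is an initial term, so $x_s\in X''$.

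The main obstacle is the remaining subcase $s>k$, where the edges are $\{j,k\}$ and $\{j,s\}$ with $j<k<s$. Closedness, through the known property that closed graphs have interval maximal cliques, gives $\{k,s\}\in E(\Gamma)$. But $y_k$ is off the diagonal of that minor, so it does not directly put $x_s$ into $X''$.

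I would first try to exploit that $y_k$ is the largest element of $X$. This forces all elements of $X$ to lie in columns $\ge k$, and I would check whether $x_s$ is forced into $X'$ for such configurations. If that fails, I would verify directly whether $x_s\in(G\cup X'):y_k$ using $x_sy_k\equiv x_ky_s$. Should neither work, I would replace $\cG$ by a smaller family adapted to closed graphs. A natural candidate is to additionally require that $y_k\in X$ implies $x_s\in X$ for all edges $\{k,s\}$ with $k<s$. With this extra condition the subcase above becomes immediate. I would then re-verify Lemma \ref{lem_det_G-seq} for the smaller family, again using that removing the largest element cannot break an implication whose conclusion is a smaller variable.
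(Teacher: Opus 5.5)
Your framework is the same as the paper's: the family $\cG$ from \eqref{det_G-sets}, Lemma \ref{lem_det_G-seq}, Theorem \ref{thm_strong_colon} applied to $X'\subset X$, and the split $z=x_k$ versus $z=y_k$. Every case you actually carry out is correct. The gap is the subcase you flag yourself, $z=y_k$ with edges $\{j,k\},\{j,s\}$ and $j<k<s$. You leave it open and only list possible remedies, so as written the proof is incomplete.

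The missing observation is to apply \eqref{eq_BEI_G-sets} twice:
\begin{itemize}
\item Since $y_k\in X$ and $\{j,k\}\in E(\Gamma)$ with $j<k$, the condition forces $x_k\in X$, hence $x_k\in X'$.
\item Closedness gives $\{k,s\}\in E(\Gamma)$. This follows directly from the definition, because the common neighbour $j$ is smaller than both $k$ and $s$; interval maximal cliques are not needed.
\item Then $x_k\in X$ with $k<s$ forces $x_s\in X'\subseteq X''$.
\end{itemize}
This is exactly how the paper handles the case. The point is that $x_k$, not $y_k$, lies on the diagonal of the minor on columns $k<s$. The maximality of $y_k$, which your first fallback relies on, plays no role.

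Your other two fallbacks both depend on this same fact. The second works once you know $x_k\in X'$. Then $x_sy_k\equiv x_ky_s\in(G\cup X')$, so $x_s\in(G\cup X'):y_k=(G\cup X'')$. Since $G$ consists of quadrics, the degree-one part of $(G\cup X'')$ is spanned by the variables in $X''$, so $x_s\in X''$.

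The third fallback shrinks $\cG$ by requiring that $y_k\in X$ force $x_s\in X$ for every edge $\{k,s\}$ with $k<s$. This does give a valid family, but it is unnecessary, and you have not verified it. In the case $z=x_k$, the new elements $y_l$ now trigger the extra condition. Checking it there needs the chain $x_k\in X\Rightarrow x_l\in X\Rightarrow x_s\in X$, which is the same double use of \eqref{eq_BEI_G-sets} that closes your open subcase.
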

\begin{proof}
As $G_\Gamma$ is a Gröbner basis, the vertices of $\Gamma$ are numbered so that $\Gamma$ is closed. Let $\cG$ be the family of subsets $X$ of variables satisfying \eqref{det_G-sets} w.\,r.\,t.\ the Gröbner basis $G_\Gamma$. In the context of binomial edge ideals,  \eqref{det_G-sets} reads
\begin{equation}\label{eq_BEI_G-sets}
    \text{
    If $\{i,j\} \in E(\Gamma)$ with $i<j$ and $x_i$ or $y_j$ belongs to $X$ then $x_j \in X$.
    }
\end{equation}

Let $X'=X\setminus \{z\}$ where $z$ is the largest element of $X$. By Lemma \ref{lem_det_G-seq}, $X' \in \cG$. We now show that $(G_\Gamma\cup X'):z=(G_\Gamma\cup X'')$ for some $X''\in \cG$. Suppose first that $z=x_a$ for some $a$. Theorem \ref{thm_strong_colon} yields
$$ X''=\{y_j\mid a<j\text{ and }\{a,j\}\in E(\Gamma)\} \cup X'\;.$$
Suppose $y_j\in X''$ such that $\{a,j\}\in E(\Gamma)$ with $a<j$. We know that $x_a\in X$ and since $X$ satisfies \eqref{eq_BEI_G-sets} we get $x_j\in X'$. To conclude that $X'' \in \cG$, we must also check that $y_a \notin X''$ and $x_i \notin X''$ for $i<a$ and $\{i,a \} \in E(\Gamma)$, as this would require $x_a \in X''$. This follows from the fact that $x_a$ was the largest member of $X$. 

Suppose instead that $z=y_a$. Theorem \ref{thm_strong_colon} gives us
$$X''=\{x_i \mid i<a \text{ and }\{i,a\}\in E(\Gamma)\} \cup X'\;.$$
 Suppose that $x_i\in X''$ such that $i<a$ and $\{i,a\}\in E(\Gamma)$. If $\{i,j\}\in E(\Gamma)$ with $i<j$ we need to prove that $x_j \in X''$.
 Closedness of $\Gamma$ implies there is an edge $\{a,j\}$ with either $a<j$ or $j<a$. In the first case, since $x_a\in X$, we get that $x_j\in X'\subseteq X''$. In the second case, we know $x_j\in X''$ by definition. Thus $X''\in \cG$.

Finally, observe that $\{x_1,\ldots,x_n,y_1,\dots,y_n\} \in \cG$. We have proved that the family of ideals of $R=S/I_\Gamma$ generated by the canonical images of the sets in $\cG$ is a monomial Koszul filtration. 
\end{proof}

As we see in the next example, not every quadratic Gröbner basis of binomials with terms of disjoint support defines an algebra with a monomial Koszul filtration. 

\begin{exam}
Let $R=\Q[x_1, \ldots, x_6]/(H)$ where 
\[
H=\{x_1x_4-x_2x_3,\, x_2x_5-x_3x_4,\, x_3x_6-x_4x_5 \}\;.
\]
The set $H$ is a Gröbner basis w.\,r.\,t.\ the lexicographic term order with $x_1> \dots > x_6$.
All the colon ideals $(0):x_i \subset R$ have generators of degree higher than 1, and hence there is no monomial Koszul filtration of $R$. Allowing other linear forms, a Koszul filtration for $R$ is given by
\begin{align*}
\{ &(0), \: (x_1+x_2), \: (x_1,\, x_2), \: (x_1+x_2,\, x_3+x_4), \: (x_1+x_2,\, x_3+x_4,\, x_5+x_6), \\ 
&(x_1+x_2,\, x_3,x_4), \: (x_1,\, x_2,\, x_3+x_4,\, x_5+x_6), \: (x_1,\ldots ,x_4,\, x_5+x_6), \: (x_1,\ldots,x_6) \}
\end{align*}
It is remarkable that when extending $(H)$ to the toric ideal $(G)$ given in Example \ref{ex_Gsequence}, there is a monomial Koszul filtration. Indeed 
\begin{align*}
    \{  &(0), \: (x_6), \:(x_4,\,x_6),\: (x_5,\,x_6),\: (x_2,\,x_4,\, x_6),\:  (x_4,\,x_5,\, x_6),\\ &(x_3, \ldots, x_6),\:
    (x_2, \ldots, x_6) ,\: (x_1, \ldots, x_6)\}
\end{align*}
is a Koszul filtration of $\Q[x_1, \ldots, x_6]/(G)$. 
\end{exam}

Based on our results and observations in Sections \ref{sec_Gset} and \ref{sec_binomial}, we pose the following conjecture. 

\begin{con}\label{conjecture}
Let $G$ be a reduced Gröbner basis of quadratic binomials each having terms of disjoint support. Then $S/(G)$ has a Koszul filtration. Moreover, if $(G)$ is toric then $S/(G)$ has a monomial Koszul filtration.
\end{con}

\section{Algorithms for constructing filtrations}\label{sec_alg}
\noindent
As above, let $R=S/I$ be a standard graded $\K$-algebra. In this section, we present an algorithm for constructing a Koszul filtration of $R$ built from finitely many linear forms, if one exists. A first step is to construct a weaker type of filtration, which we now define.

\begin{defn}
A \emph{linear filtration} of $R$ is a set $\mathcal{L}$ of ideals generated by elements of $R_1$, such that $\cL$ contains $\m_R$ and satisfies the condition:
\begin{enumerate}
    \item[\tagLF] For every nonzero ideal $\fa\in \mathcal{L}$, there exists an ideal $\fb\in \mathcal{L}$ contained in $\fa$ such that $\fa/\fb$ is cyclic and $\fb:\fa$ is generated by elements of $R_1$.
\end{enumerate}
In other words, $\mathcal{L}$ satisfies \tagKFlin, \tagKFmax, and \tagLF.
\end{defn}

Koszul filtrations and linear flags are examples of linear filtrations. For convenience, we refer to a set of ideals in $R$ satisfying only \tagKFlin\ and \tagLF\ as a \emph{partial linear filtration}, and a set satisfying only \tagKFlin\ and \tagKFcol\ as a \emph{partial Koszul filtration}; that is, we do not require the set to contain $\m_R$.

We start with an algorithm which constructs a partial linear filtration, where the generators of the ideals are restricted to a given finite set $L\subseteq R_1$. The algorithm begins by considering ideals $(\ell)$, for $\ell\in L$, which are added to a set $\cL$ of potential filtration members if the colon ideal $(0):\ell$ is generated by elements of $L$. 
Proceeding inductively, we record this process in the following algorithm.

\begin{algorithm}[H]
\caption{Construct a partial linear filtration}\label{alg_partial_linear}
\begin{algorithmic}[1]
\Require{A standard graded $\K$-algebra $R$ of embedding dimension $n$, and a finite subset $L\subseteq R_1$}
\Ensure{A finite partial linear filtration $\mathcal{L}$ of $R$, whose ideals are generated by elements of $L$}
\State $\mathcal{L}\colonequals\{(0)\}$
\For{$i=1$ \textbf{to} $n$}   
\For{$L'\subseteq L$ such that $|L'|=i$}
\For{$\fb\in \mathcal{L}$ such that $\fb\subset (L')$ \textbf{and} $(L')/\fb$ is cyclic}
  \If{$\fb:L'=(L'')$ for some $L''\subseteq L$}
   \State add $(L')$ to $\mathcal{L}$
\EndIf
\EndFor
\EndFor
\EndFor
\State \Return{$\mathcal{L}$}
\end{algorithmic}
\end{algorithm}

\begin{proof}
Termination of the algorithm follows from finiteness of $n$, $|L|$, and $|\cL|$. Indeed, the cardinality of $\cL$ is at most $2^{|L|}$.

The resulting set $\cL$ contains ideals generated by, and whose colon ideals are generated by, elements in $L\subseteq R_1$, thus it satisfies \tagKFlin\ and \tagLF. 
\end{proof}

\begin{rem}
If $R$ has a linear filtration, which consists of ideals generated by elements of $L$, then it must be contained in the output $\cL$ of Algorithm \ref{alg_partial_linear}. It is straightforward to check whether the resulting set $\cL$ from Algorithm \ref{alg_partial_linear} is a linear filtration: one only needs to determine whether $\m_R\in \cL$.
\end{rem}

Given a linear filtration $\cL$ of $R$, although the colon ideals are generated by elements of $R_1$, there is no guarantee that the colon ideals belong to $\cL$, as required by the definition of a Koszul filtration. We therefore next trim such a linear filtration to construct a (partial) Koszul filtration. Given a finite linear filtration $\cL$, the algorithm constructs a subset $\cF\subseteq \cL$ which has the property \tagKFcol, that is, the colon ideals belong to the set $\cF$.

\begin{algorithm}[H]
\caption{Construct a partial Koszul filtration}\label{alg_partial_koszul}
\begin{algorithmic}[1] 
\Require{A finite linear filtration $\mathcal{L}$ of a standard graded $\K$-algebra $R$}
\Ensure{A partial Koszul filtration $\mathcal{F}$ of $R$}
\State{$\cF \colonequals \cL$ \textbf{and} $\cF_{\text{prev}} \colonequals \varnothing$}
\While{$\cF \neq \cF_{\text{prev}}$}
\State{$\cF_{\text{trim}}\colonequals \{(0)\}$}
\For{$\fa\in \cF$} 
\If{$\fb:\fa\in \cF$ for some $\fb\in \cF$ with $\fb\subset \fa$ and $\fa/\fb$ cyclic}
\State{add $\fa$ to $\cF_{\text{trim}}$} 
\EndIf
\EndFor
\State{$\cF_{\text{prev}}\colonequals \cF$}
\State{$\cF \colonequals \cF_{\text{trim}}$}
\EndWhile
\State \Return {$\cF$}
\end{algorithmic}
\end{algorithm}

\begin{proof}
The set $\cL$ is finite, thus the sets $\cF_{\text{prev}}$ and $\cF_{\text{trim}}$ at each iteration are finite.  It follows that the inner \textbf{for} loop is finite. The \textbf{while} loop thus has finitely many iterations, since at each step the new set $\cF$ is a proper subset of the previous set $\cF_{\text{prev}}$, until it returns the same set. Hence the algorithm terminates.

The set $\cF$ that is returned by the algorithm evidently satisfies \tagKFlin\ and \tagKFcol, and is therefore a partial Koszul filtration.  
\end{proof}

\begin{rem}
We make a few comments regarding these algorithms.
\begin{enumerate}
\item Combining Algorithms \ref{alg_partial_linear} and \ref{alg_partial_koszul} yields a way to construct a partial Koszul filtration $\cF$ of $R$, which consists of ideals whose generators belong to a finite set $L\subseteq R_1$. The two algorithms will produce a Koszul filtration consisting of ideals whose generators belong to $L$, if such a Koszul filtration exists. 
Indeed, it is a Koszul filtration if $\m_R\in \cF$. 
\item If $\K$ is a finite field, then the set of linear forms is finite, so this algorithm determines the existence of a Koszul filtration.
\item If a Koszul filtration is obtained by this algorithm, it is maximal in the sense that it contains all possible Koszul filtrations consisting of ideals generated by elements of $L$. 
\end{enumerate}
\end{rem}

A starting point for constructing a Koszul filtration is often to first find a linear flag. We thus turn to an algorithm that searches for linear flags. This algorithm starts at the maximal ideal $\m_R$, and first considers ideals $\fa_{n-1}\subset \m_R$ with one fewer generator. If $\fa_{n-1}:\m$ can be generated by elements of a given set $L\subseteq R_1$, the algorithm stores the partial flag $\fa_{n-1}\subset \m_R$, and continues inductively. If at any step, an ideal $\fa_i$ has no ideal $\fa_{i-1}\subset \fa_i$ such that $\fa_i/\fa_{i-1}$ is cyclic and $\fa_{i-1}:\fa_i$ can be generated by elements of $L$, then the partial flag is thrown out.

\begin{algorithm}[H]
    \caption{Find all linear flags}\label{alg_findflags}
    \begin{algorithmic}[1]
        \Require{A standard graded $\K$-algebra $(R,\m_R)$ of embedding dimension $n$, and a finite subset $L\subseteq R_1$}
        \Ensure{A set $\Flags$ of all linear flags of $R$ that consists of ideals which are generated by elements of $L$}
        \State{$\Flags \colonequals \{\{\m_R\}\}$}      
        \While{$\Flags \ne \varnothing$ \textbf{and} $(0)$ is not contained in any $F\in \Flags$}
        \State$\Flags_{new}\colonequals \varnothing$
        \For{$\{\fa_i\subset\cdots\subset \fa_{n}= \m_R\}\in \Flags$}
        \For{$L'\subseteq L$ such that $|L'|=i-1$, $(L')\subset \fa_i $,  $\fa_i/(L')$ is cyclic }
        \If{ $(L'):\fa_i=(L'')$ for some $L''\subseteq L$}
        \State{add $\{(L')\subset \fa_i\subset\cdots\subset\fa_n=\m_R\}$ to $\Flags_{new}$}
        \EndIf
        \EndFor
        \EndFor 
        
        \State  $\Flags:= \Flags_{new}$
        \EndWhile
     \State \Return {$\Flags$}   
     \end{algorithmic}
\end{algorithm}

\begin{proof}
Termination follows from the finiteness of $L$. Let us consider the correctness of the output. Given a set $\{\fa_i\subset \cdots \subset \fa_{n}= \m_R\}$, if there is no ideal $\fa_{i-1}\subset \fa_i$ satisfying the necessary conditions, then there is no linear flag of $R$ consisting of ideals generated by elements of $L$. On the other hand, if the \textbf{while} loop runs until the zero ideal belongs to one (and hence every) $F\in \Flags$, then the output contains all possible linear flags of $R$ consisting of ideals generated by elements of $L$.
\end{proof} 

Implementations of these algorithms in Macaulay2 are given in Appendix \ref{sec_appendix}. We end the section by applying these algorithms to the algebras considered in \cite{Roos1993} and to the pinched Veronese algebra. 

\begin{exam}\label{ex_Roos}
Let $\K$ be a field of characteristic zero and let $R_{i}=\K[x,y,z,u]/I$ be any of the 83 rings considered by Roos in \cite[Tables 1--7]{Roos1993}, 46 of which are Koszul. Using the algorithms in this section, we briefly argue here that all of the Koszul algebras have Koszul filtrations, although not all are G-quadratic. 

First, for $i$ equal to one of 1--5, 8--11, 18, 21, 23, 25--28, 38, 41, 43, 46, 49, 50, 52--54, 61, 63, 64, 66--68, 70--73, 75--83, the ring $R_i$ has a monomial Koszul filtration. This is straightforward to check with the Macaulay2 functions in \ref{M2_linear} and \ref{M2_koszul}, but for many of them this is clear for other reasons, such as being a quadratic monomial ideal \cite[Theorem 3.12]{Conca2014}. Rings $R_{45}$ and $R_{58}$, however, do not have monomial Koszul filtrations. Nevertheless, one can still find Koszul filtrations.  Initialize the functions from \ref{M2_linear} and \ref{M2_koszul} in Macaulay2 (the input is truncated here to save space):
{\footnotesize
\begin{Verbatim}[samepage=true]
i1 : partialLinearFiltration = (R,L) -> ( ... );
i2 : partialKoszulFiltration = (R,LF) -> ( ... );
\end{Verbatim}
}
\noindent
Check ring number 45:
{\footnotesize
\begin{Verbatim}[samepage=true]
i3 : R45=QQ[u,x,y,z]/ideal(x*y + y*z, x*y + z^2 + y*u, y*u + z*u, y^2, x*z);
i4 : L45={u,x,y,z,x+y,y+z};
i5 : PKF45=partialKoszulFiltration(R45,partialLinearFiltration(R45,L45));
i6 : member(trim ideal(z,y,x,u),flatten PKF45)
o6 = true
\end{Verbatim}
}
\noindent
Then check ring number 58:
{\footnotesize
\begin{Verbatim}[samepage=true]
i7 : R58=QQ[u,x,y,z]/ideal(x^2 + x*y, x^2 + z*u, y^2, z^2, x*z + y*u, x*u);
i8 : L58={u,x,y,z,2*x+y-z,2*x+y+z,2*u-y+z,2*u+y+z};
i9 : PKF58=partialKoszulFiltration(R58,partialLinearFiltration(R58,L58));
i10 : member(trim ideal(z,y,x,u),flatten PKF58)
o10 = true
\end{Verbatim}
}
\noindent
In either case, the output verifies that the ring has a Koszul filtration.

Last we note that the ring $R_{58}$ is not G-quadratic. The Hilbert series of $R_{58}$ is $\frac{1+3T-3T^3}{1-T}$. If the defining ideal $I$ of $R_{58}$ would have a quadratic Gröbner basis, in any choice of coordinates, then the initial ideal $\init(I)$ would be an ideal generated by monomials of degree 2 such that $S/\init(I)$ has the same Hilbert series as $R_{58}$. To obtain this series the ideal $\init(I)$ must have six generators and dimension one. A computation shows that no such monomial ideal has the desired Hilbert series. So we can conclude that $R_{58}$ is not G-quadratic.
\end{exam}

\begin{exam}\label{PV_example}
The pinched Veronese algebra is the $\K$-algebra $PV$ generated by all monomials of $\K[a,b,c]$ of degree~3 involving at most two variables, that is,  
$$
PV = \K[a^3, a^2b, a^2c, ab^2, ac^2, b^3, b^2c, bc^2, c^3].
$$
This algebra was shown to be Koszul by Caviglia \cite{Caviglia2009}, motivated by earlier work on Koszulness of Veronese subalgebras by Backelin and Fröberg \cite{BF1985}.  However, it is an open question whether the pinched Veronese $PV$ is G-quadratic \cite[Question 2.15]{conca2012koszulalgebrasregularity}. 
We fix here a presentation of $PV$ by its natural coordinates. Let $S = \K[x_1,\ldots,x_9]$ and define $\rho: S \rightarrow \K[a,b,c]$ to be the homomorphism sending $x_i$ to the $i$-th generator of $PV$. Explicitly,  
$$
\rho(x_1) = a^3,\; \rho(x_2) = a^2b,\; \ldots,\; \rho(x_9) = c^3.
$$
Then $S/I$ is isomorphic to $PV$, where $I$ denotes the toric ideal $\ker(\rho)$.
We show that $PV$ has no monomial Koszul filtration. As $I$ is prime, its reduced Gröbner basis $G$ is generated by binomials with terms of disjoint support.  Letting $L=\{x_1,\ldots ,x_9\}$, one can employ Algorithms \ref{alg_partial_linear} and \ref{alg_partial_koszul} to show that $S/I$ has no monomial Koszul filtration. This can be verified directly in Macaulay2 with the functions in \ref{M2_linear} and \ref{M2_koszul}: set $\cL=\texttt{partialLinearFiltration}(S/I,L)$ and then check that $\texttt{partialKoszulFiltration}(S/I,\cL)$ contains no ideals with two or more minimal generators, hence $S/I$ has no monomial Koszul filtration. 
\end{exam}

\section{A G-quadratic algebra with no Koszul filtration}\label{sec_b4t}
\noindent
The fact that not every Koszul algebra has a Koszul filtration was demonstrated in \cite{CRV2001} by a generic Artinian complete intersection in five variables. Moreover, no such algebra in five or more variables can have a Koszul filtration, and neither is it G-quadratic.
In this section, we give an example which confirms a conjecture of \cite{Ene_2015}: we show there exists a G-quadratic algebra which does not have a Koszul filtration. The example we consider is the Möbius algebra of the cycle matroid of a certain strongly chordal graph.  Such algebras have been recently studied in \cite{LMMP_2025}, where it was shown that the graded Möbius algebra of the cycle matroid of a graph is G-quadratic if and only if it is Koszul---and these conditions are equivalent to the underlying graph being strongly chordal.

\begin{exam}\label{b4t}
Let $\K$ be a field and consider the algebra
\begin{equation}\label{eq_b4t}
\begin{split}
B&=\frac{\K[a,b,c,d,e,f,g,h,i,j,k,l]}{(a^2,b^2,c^2,d^2,e^2,f^2,g^2,h^2,i^2,j^2,k^2,l^2)+\fI} \quad \text{where}
  \\[10pt]
\fI= (&ab-aj,\, ab-bj,\,
ac-ad,\, ac-cd,\,
cg-cj,\,cg-gj,\,
bd-bg,\,bd-dg,\, \\
&de-df,\,de-ef,\,
gh-gi,\,gh-hi,\,
jk-jl,\,jk-kl).
\end{split}
\end{equation}
\noindent
The algebra $B$ arises from the broken 4-trampoline graph in Figure \ref{fig_b4t} by letting each triangle of edges labelled $x, y, z$ give rise to generators $xy-xz$ and $xy-yz$ of the ideal $\fI$.
The algebra $B$ is isomorphic to the graded Möbius algebra of the cycle matroid of the broken 4-trampoline as defined in \cite{LMMP_2025}.
The isomorphism follows from \cite[Proposition 3.1(b)]{LMMP_2025} and the fact that this graph is chordal, along with an induction argument as used in \cite{LMMP_2025}. 
However, the arguments in this paper use solely the presentation \eqref{eq_b4t}.
\begin{figure}[H]
\caption{A labelling of the broken 4-trampoline corresponding to the presentation of the algebra $B$.}\label{fig_b4t}
\begin{center}    
\begin{tikzpicture}[
    vertex/.style={
        circle, 
        draw=black, 
        fill=black, 
        minimum size=3pt, 
        inner sep=0pt
    },
    labelspace/.style={
        inner sep=5pt
    }
]
    \node[vertex] (TL) at (0,1) {}; 
    \node[vertex] (TR) at (1,1) {}; 
    \node[vertex] (BL) at (0,0) {}; 
    \node[vertex] (BR) at (1,0) {}; 
    \node[vertex] (Top)   at (0.5,2) {}; 
    \node[vertex] (Left)  at (-1,0.5) {};
    \node[vertex] (Right) at (2,0.5) {};
    
    \draw (TL) -- (TR) node[midway, above] {$g$};
    \draw (BL) -- (BR) node[midway, below] {$a$};
    \draw (TL) -- (BL) node[midway, left] {$j$};
    \draw (TR) -- (BR) node[midway, right] {$d$};
    \draw (TL) -- (BR) node[pos=0.6, right] {$b$};
    \draw (TR) -- (BL) node[pos=0.6, left] {$c$};
    \draw (Right) -- (BR) node[midway, below] {$e$};
    \draw (Right) -- (TR) node[midway, above] {$f$};
    \draw (Top) -- (TR) node[pos=.6, above, labelspace] {$\: h$};
    \draw (Top) -- (TL) node[pos=.6, above, labelspace] {$i\:$};
    \draw (Left) -- (TL) node[midway, above] {$k$};
    \draw (Left) -- (BL) node[midway, below] {$l$};
\end{tikzpicture}
\end{center}
\end{figure}
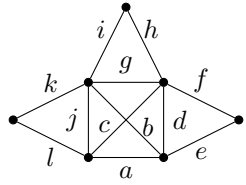
\end{exam}

\begin{rem}\label{rem_b4tGQ}
The algebra $B$ in Example \ref{b4t} is G-quadratic, hence Koszul, independent of the choice of the field $\K$. This follows by \cite[Theorem A]{LMMP_2025}, but can also be seen directly by verifying that with lexicographic ordering such that 
$$a>b>c>e>h>k>l>i>f>j>g>d,$$ one obtains a quadratic Gröbner basis. 
\end{rem}
\begin{rem}\label{rem_b4tKF}
If $\K=\F_2$, the field with 2 elements, then the algebra $B$ in Example \ref{b4t} has a Koszul filtration given by the following set \texttt{LF} of ideals:
\begin{align*}
\{&\{(0)\},\ \{(d),\,(d+e+f)\},\  
\{(e,d),\,(d+e+f,b+e+f+g),\,(g,d+e+f)\},\\
&\{(f,e,d),\,(d+e+f,b+e+f+g,a+c+e+f),\\
&\ (g,d+e+f,b+e+f),\,(g,d+e+f,b+c+f)\},\\
&\{(e+f,d,b+g,a+c),\,(g,d+e+f,c+j,b+e+f),\,(g,e+f,d,b)\},\\
&\{(j,e+f,d,b+g,a+c),\,(e+f,d,c+g+j,b+g,a+g+j),\\
&\ (h+i,g,d+e+f,c+j,b+e+f)\},\\
&\{(j,e+f,d,c,b+g,a),\,(j,e+f,d,c+g,b+g,a+g),\,(g,e+f,d,c+j,b,a+j),\\
&\ (i,h,g,d+e+f,c+j,b+e+f),\,(h+i,g,e+f,d,c+j,b)\},\\
&\{(j,g,e+f,d,c,b,a),\,(k+l,j,e+f,d,c+g,b+g,a+g),\\
&\ (h+i,g,e+f,d,c+j,b,a+j)\},\\
&\{(j,h+i,g,e+f,d,c,b,a),\,(l,k,j,e+f,d,c+g,b+g,a+g)\},\\
&\{(j,i,h,g,e+f,d,c,b,a)\},\ \{(j,i,h,g,f,e,d,c,b,a)\},\\
&\{(k,j,i,h,g,f,e,d,c,b,a)\},\ \{(l,k,j,i,h,g,f,e,d,c,b,a)\}
\}
\end{align*}
\noindent
We verify this in Macaulay2 with the function \texttt{partialKoszulFiltration(B,LF)} defined in Code \ref{M2_koszul}.
\end{rem}
The principal aim of this section is to prove the following.

\begin{thm}\label{thm_b4t}
Let $B$ be the algebra from Example \ref{b4t}. If the base field is $\K=\F_3$, the field with 3 elements, then $B$ has no Koszul filtration. 
\end{thm}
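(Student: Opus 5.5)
Over $\K=\F_3$ the space $B_1$ is finite, with $3^{12}$ elements. Remark (2) after Algorithm \ref{alg_partial_koszul} therefore reduces the statement, in principle, to a finite computation: run Algorithms \ref{alg_partial_linear} and \ref{alg_partial_koszul} with $L=B_1$ and check that $\m_B\notin\cF$. Doing this naively would mean handling all subspaces of $\F_3^{12}$, which is far too many. So the plan is to exploit structure in order to cut the search down, and to argue by contradiction. Assume $\cF$ is a Koszul filtration. By \tagKFcol, applied repeatedly starting from $\m_B$, $\cF$ contains a linear flag $(0)=\fa_0\subset\fa_1\subset\cdots\subset\fa_{12}=\m_B$. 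It also contains all the colon ideals arising along the way, and these must themselves admit such chains. The contradiction should come from the bottom of the filtration, where the constraints are most rigid.

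\textbf{Step 1 (bottom layer).} Classify the linear forms $\ell\in B_1$ such that $(0):\ell$ is generated by linear forms, up to scalars and the symmetries of the broken 4-trampoline. Since every variable squares to zero and the relations are local to triangles, I expect $(0):\ell$ to be computable from the support of $\ell$, meaning which triangles and edges $\ell$ touches. I expect that only forms with very restricted support qualify, for example pendant-triangle forms such as $d$ or $d+e+f$, which appear in Remark \ref{rem_b4tKF}. Over $\F_2$ a form like $e+f$ works, but over $\F_3$ the analogous forms need particular coefficients, and the additional coefficient choices change annihilators. For each admissible $\ell$ I would record $(0):\ell$ explicitly. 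Every member of $\cF$ with one generator must be one of these.

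\textbf{Step 2 (propagate upward).} Show that any ideal of $\cF$ with few generators, and any colon ideal $\fb:\fa$ with $\fb,\fa\in\cF$, must lie in an explicitly describable finite list $\cL_0$. To do this, run Algorithm \ref{alg_findflags} and Algorithm \ref{alg_partial_linear} over $\F_3$. Keep the computation feasible by quotienting by the automorphism group: the graph symmetries, scaling of variables, and the coordinate symmetries $x\leftrightarrow y$ within a triangle that preserve $\fI$. At each stage, use the $G$-set and Gröbner basis machinery, together with the quadratic Gröbner basis from Remark \ref{rem_b4tGQ}, to compute colon ideals quickly.

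\textbf{Step 3 (trim and conclude).} Apply Algorithm \ref{alg_partial_koszul} to the linear filtration produced this way. The goal is to show that every linear flag in $\cL$ needs some colon ideal that is not in the trimmed set, so the trimming eventually removes $\m_B$. This is where characteristic $3$ should enter. The $\F_2$ filtration of Remark \ref{rem_b4tKF} relies on forms like $b+e+f+g$ having linear annihilators. Over $\F_3$, the corresponding colon ideals (for instance $(d+e+f):(\ldots)$) acquire quadratic generators, because identities such as $2=0$, used to cancel terms like $ef+fe$, fail.

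The main obstacle is Step 2. The complete search over subspaces of $\F_3^{12}$ is infeasible, so the key reduction must be a lemma restricting which linear forms can occur as generators of members of $\cF$. A natural candidate: any member of $\cF$ is generated by forms whose restriction to each triangle lies in a small explicit set. With such a lemma one can choose a finite, symmetry-reduced $L$ and rigorously justify running the algorithms only on $L$. I would first try to prove this support lemma by computing $(0):\ell$ and $(\ell):\ell'$ for $\ell$ a general form on a single triangle. I would then handle the remaining cases by the Macaulay2 computation, as in Example \ref{PV_example}.
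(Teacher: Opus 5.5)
Your Step~1 is essentially the paper's Lemma~\ref{lem_37linearforms}, which the paper obtains by brute force over all $(3^{12}-1)/2$ forms up to sign. Note that the resulting 37 forms include central ones such as $a$, $a+b$, $a+b+j$, not only pendant-triangle forms.

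The genuine gap is Step~2. Running Algorithms~\ref{alg_partial_linear} and~\ref{alg_partial_koszul} on a restricted set $L\subsetneq B_1$ only shows that no Koszul filtration has all its ideals generated by elements of $L$. To conclude the theorem you must also know that every member of every Koszul filtration is generated by elements of $L$. That is exactly your ``support lemma,'' which you neither state precisely nor prove. You would be proving it under the (ultimately contradictory) assumption that $\cF$ exists, so it carries the full weight of the theorem.

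A per-triangle computation of $(0):\ell$ and $(\ell):\ell'$ will not establish it. The candidate ideals that must be ruled out, e.g.\ $(d,b+c+j,a+g+j)$ or $(g,b-c-d+j,a)$, have generators spread over several overlapping triangles, so ``restriction to a triangle'' is not even well defined. Without such a lemma, Step~2 is a search over more than $10^{17}$ subspaces of $\F_3^{12}$. The symmetries available do nothing to make this feasible: they are the permutations of Lemma~\ref{lem_permutation} and a global sign, since rescaling a single variable does not preserve the defining ideal.

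The idea you are missing is to argue locally at the bottom of the filtration rather than construct $\cF$ globally. Call an ideal forbidden if it lies in no Koszul filtration. Two criteria require no knowledge of the rest of $\cF$:
\begin{enumerate}
\item an ideal whose minimal free resolution is not linear is forbidden \cite[Proposition 1.2]{CTV2001}, a test your trimming in Step~3 never uses;
\item $\fa$ is forbidden if every partial linear flag $(0)=\fa_0\subset\cdots\subset\fa_t=\fa$ has some forbidden colon ideal $\fa_{s-1}:\fa_s$.
\end{enumerate}
Any Koszul filtration contains a principal ideal $(w)$, hence also $(0):w$. So $w$ is among the 37 forms, and it suffices to show these 37 principal ideals are forbidden. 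The paper does this in four stages:
\begin{itemize}
\item Betti-table checks of $(0):w$ for the seven forms involving $a,b,c$ (Lemma~\ref{lem_abc});
\item enumerating partial linear flags of $(0):d$ and $(0):g$ (Lemma~\ref{lem_dgj});
\item showing $(f,e,d)$, $(i,h,g)$, $(l,k,j)$ are forbidden through the auxiliary ideals $(f,e,d,b-g,a-c)$ and $(i,h,g,c-j,b-d)$ (Lemma~\ref{lem_fed});
\item showing that each of the 14 remaining forms, up to symmetry, forces one of these three ideals.
\end{itemize}
After the initial scan of $B_1$, every enumeration happens inside an ideal with at most five linear generators. That is, it runs over subspaces of an $\F_3$-vector space of dimension at most $5$, which is what turns the problem into a feasible finite computation.
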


The remainder of this section is devoted to proving this theorem. Henceforth, $B$ denotes the algebra in Example \ref{b4t}, over the base field $\K=\F_3$. Throughout the proof, we utilize Macaulay2 \cite{M2} for computations:
{\footnotesize
\begin{Verbatim}[samepage=true]
i1 : B=ZZ/3[a..l]/ideal(a^2,b^2,c^2,d^2,e^2,f^2,g^2,h^2,i^2,j^2,k^2,l^2,
    a*(b-j),b*(a-j),a*(c-d),c*(a-d),c*(g-j),g*(c-j),b*(d-g),
    d*(b-g),d*(e-f),e*(d-f),g*(h-i),h*(g-i),j*(k-l),k*(j-l));
\end{Verbatim}
}

\begin{lem}\label{lem_37linearforms}
If $w\in B_1$ such that the minimal generators of
$(0):w$ are contained in $B_1$,
then $w$ or $-w$ must be one of the following 37 linear forms: 
$a$, $b$, $c$, $d$, $e$, $f$, $g$, $h$, $i$, $j$, $k$, $l$, 
$a+b$, $a+c$, $d+e$, $d-e$, $d+f$, $d-f$, $e+f$, $g+h$, $g-h$, $g+i$, $g-i$, $h+i$, $j+k$, $j-k$, $j+l$, $j-l$, $k+l$,
$a+b+j$, $a+c+d$, $d+e+f$, $d-e-f$, $g+h+i$, $g-h-i$, $j+k+l$, $j-k-l$.
\end{lem}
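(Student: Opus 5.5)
Since $\K=\F_3$ and $B_1$ is $12$-dimensional, $B_1$ has $3^{12}$ elements. Up to sign this leaves $(3^{12}-1)/2=265720$ nonzero linear forms. The plan is therefore a finite verification in Macaulay2, organized so that it is both feasible and transparent. For each nonzero $w\in B_1$, taken up to scalar (over $\F_3$, up to sign, since $(0):w=(0):(-w)$), compute the annihilator $(0):w$ in $B$. Then test whether its minimal generators all have degree $1$, that is, whether the ideal equals the ideal generated by its degree-one part. Enumerating all $265720$ representatives directly is possible but slow, so I would first reduce the search structurally.

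The reduction uses the block structure of the relations, which follows the triangles of the broken 4-trampoline. Write $w=\sum \lambda_x x$. The key observation is that $B$ is a quotient of the exterior-like algebra with all $x^2=0$, and the quadratic relations only link variables lying in a common triangle. Take two variables $x,y$ that never appear together in a relation, for instance $e$ and $h$, and take the socle-type elements built from them. A degree-$2$ element of $(0):w$ that is not in the ideal generated by the linear part of $(0):w$ should appear as soon as the support of $w$ meets two different triangles, or uses a coefficient pattern inside a triangle other than those listed. For example, if $\lambda_e,\lambda_h\neq 0$, one expects a quadratic annihilator such as a product of forms from the two triangles. I would establish this by reasoning with the Hilbert function of $B$: compute $\dim_\K (0):_{B_1}w$ and compare it with $\dim_\K ((0):w)_2$ against the degree-$2$ part generated by $((0):w)_1$.

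In practice, the cleanest proof is computational. I would stratify $w$ by its support pattern, since the relations are invariant under rescaling variables only in restricted ways. For each support pattern and each coefficient vector over $\F_3$ (up to sign), Macaulay2 computes \texttt{ann} of $w$ and checks whether \texttt{ideal select(flatten entries mingens ann w, f -> degree f == \{1\})} equals \texttt{ann w}. Since every generator has degree at most $\operatorname{reg}$, it suffices to compare in degrees $\le 3$ or so. A full loop over all $265720$ forms is within reach of Macaulay2 and produces exactly the 37 listed forms up to sign. To cut the loop down, I would first observe that the degree-one part of $(0):w$ must be nonzero; otherwise $(0):w$, which is nonzero because $B$ is Artinian, would have no degree-one generators. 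This forces $w$ to annihilate some linear form, which severely restricts the support to within one triangle or its neighbors, and only those $w$ are then tested.

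The main obstacle is making this reduction rigorous rather than heuristic. One must show that $w$ with support spread across non-adjacent triangles has $((0):w)_1=0$, or else that some genuinely quadratic generator is needed. I would handle this by a linear-algebra computation of the multiplication map $B_1\to B_2$ by $w$, whose kernel is $((0):w)_1$. This is a $12\times\dim B_2$ matrix whose entries are linear in the $\lambda_x$, so the locus where its rank drops can be computed symbolically and then intersected with $\F_3^{12}$. If the symbolic step is unwieldy, I would fall back on the exhaustive enumeration over $\F_3$, which is a certified finite check.
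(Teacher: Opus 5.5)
Your fallback is exactly the paper's proof: loop over all $(3^{12}-1)/2=265720$ nonzero linear forms up to sign, compute $(0):w$ in Macaulay2, and check that its minimal generators all have degree $1$, using that $(0):w\neq 0$ because $B$ is Artinian. The rigour rests on that full enumeration alone, because your shortcuts fail. A nonzero linear annihilator does not confine the support of $w$ to one triangle and its neighbors: for example $(e+k)(e-k)=e^2-k^2=0$ in $B$. The truncation to degrees $\le 3$ is also not justified.
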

\begin{proof}
Define a set \texttt{linearForms} of all linear forms in $B_1$. To remove scalar multiples we remove any linear form whose leading coefficient is not equal to $1$; this also excludes $0$.
{\footnotesize
\begin{Verbatim}[samepage=true]
i2 : L = gens B;
i3 : linearForms = {0_B};
i4 : scan(L, x -> (linearForms = flatten apply(linearForms,v->{v,v+x,v-x});))
i5 : linearForms = select(linearForms,x->leadCoefficient lift(x,ambient B) == 1);
i6 : #linearForms
o6 = 265720
\end{Verbatim}
}
\noindent
The set \texttt{linearForms} now has the expected $(3^{12}-1)/2$ linear forms. Finally, check which of these have a linear annihilator (this takes a while). As $\operatorname{depth}(B)=0$, $(0):w\not=(0)$ for all $w\in B_1$, and so one only needs to look for degree 1 generators:
{\footnotesize
\begin{Verbatim}[samepage=true]
i7 : linearAnn = {};
i8 : for w in linearForms do (
    if member(unique flatten degrees(trim ann(w)),{{1}}) then (
        linearAnn = append(linearAnn,w);
        );
    );
i9 : netList pack(10, linearAnn)
     +-+-+-----+-----+---------+-----+---------+---------+-----+---------+
o9 = |l|k|k + l|j    |j + l    |j - l|j + k    |j + k + l|j - k|j - k - l|
     +-+-+-----+-----+---------+-----+---------+---------+-----+---------+
     |i|h|h + i|g    |g + i    |g - i|g + h    |g + h + i|g - h|g - h - i|
     +-+-+-----+-----+---------+-----+---------+---------+-----+---------+
     |f|e|e + f|d    |d + f    |d - f|d + e    |d + e + f|d - e|d - e - f|
     +-+-+-----+-----+---------+-----+---------+---------+-----+---------+
     |c|b|a    |a + c|a + c + d|a + b|a + b + j|         |     |         |
     +-+-+-----+-----+---------+-----+---------+---------+-----+---------+
\end{Verbatim}
}
\noindent
This is the desired set.
\end{proof}

Let us say that an ideal of $B$ is \emph{forbidden} if it can not appear in any Koszul filtration of $B$. For instance, an ideal is forbidden if its minimal free resolution is not linear \cite[Proposition 1.2]{CTV2001}.  
Our strategy to prove Theorem \ref{thm_b4t} is to show that 
every principal ideal generated by one of the linear forms in Lemma \ref{lem_37linearforms} is forbidden. 

\begin{lem}\label{lem_abc}
The ideals $(a)$, $(b)$, $(c)$, $(a+b)$, $(a+c)$, $(a+b+j)$, and $(a+c+d)$ do not belong to a Koszul filtration of $B$.
\end{lem}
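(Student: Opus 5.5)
We show that each of the seven principal ideals $(w)$, for $w\in\{a,b,c,a+b,a+c,a+b+j,a+c+d\}$, is forbidden. If $(w)$ lies in a Koszul filtration $\cF$, then \tagKFcol\ applied to $(w)$ forces $\fb=(0)$, so $(0):w\in\cF$. Applying \tagKFcol\ again, to $(0):w$ and then to the successive colon ideals, produces a chain of ideals in $\cF$ that are all generated by linear forms. The strategy is to compute $(0):w$ explicitly and show that it, or some ideal it forces into $\cF$, cannot occur in a Koszul filtration. The simplest witness would be an ideal whose minimal free resolution is nonlinear, which is forbidden by \cite[Proposition 1.2]{CTV2001}.

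The first step is to compute the annihilators in Macaulay2, using the same session as in Lemma \ref{lem_37linearforms}. For example, from the relations $a^2$, $ab-aj$ and $ac-ad$, we expect $(0):a=(a,\,b-j,\,c-d)$, and analogous formulas for $b$ and $c$ from their triangles. The seven forms are exactly those supported on the triangles meeting the central edges $a,b,c$. The annihilators should therefore be generated by differences of edge variables lying in neighbouring triangles.

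For each annihilator $\fa=(0):w$, we then compute its minimal free resolution over $B$ up to a few homological degrees. If it is not linear, $(w)$ is forbidden and we are done. If it is linear, we instead examine every way \tagKFcol\ could be satisfied for $\fa$. We enumerate the candidates $\fb\subset\fa$ with $\fa/\fb$ cyclic over $\F_3$; there are finitely many, since $\fa$ has few generators. For each $\fb$ we compute $\fb:\fa$ and check that it is either not linearly generated or is itself forbidden, for instance because its resolution is nonlinear. This is a finite search, in effect Algorithms \ref{alg_partial_linear} and \ref{alg_partial_koszul} restricted to subideals of $\fa$ and run with $L$ equal to all of $(\fa)_1$, so it can be executed over $\F_3$.

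The main obstacle will be the cases where $(0):w$ itself has a linear resolution, so that the argument has to descend several levels. The branching over choices of $\fb$ could then grow quickly. To keep it manageable, we will use the symmetries of $B$ coming from automorphisms of the broken 4-trampoline, which permute $a,b,c$ and the triangles consistently. These should reduce the seven cases to about three orbit representatives: $a$, $a+b$, and $a+b+j$. The computation also has to be done genuinely over $\F_3$, because by Remark \ref{rem_b4tKF} the analogous ideals are not all forbidden over $\F_2$. So at the critical step we expect some colon ideal, involving a form like $b+e+f+g$ that works only in characteristic $2$, to fail to be linearly generated.
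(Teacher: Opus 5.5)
Your main line is the paper's proof. Applying \tagKFcol\ to $(w)$ forces $(0):w$ into the filtration, and the paper then checks in Macaulay2 over $\F_3$ that each of the seven annihilators has a nonlinear minimal free resolution. For $(0):a=(a,b-j,c-d)$ the nonlinear entry appears within three steps, and for the other six within the first two. So the multi-level descent you prepare for is never needed, and the paper handles all seven cases directly, without using symmetry.

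Two side claims in your plan are wrong and should be corrected.

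\emph{The orbit reduction.} Reducing to the three representatives $a$, $a+b$, $a+b+j$ is not valid. Every symmetry in Lemma \ref{lem_permutation} fixes $a$, and so does every automorphism of the graph or of its cycle matroid. The permutation $\pi_4$ only interchanges $b\leftrightarrow c$, $a+b\leftrightarrow a+c$ and $a+b+j\leftrightarrow a+c+d$, so there are four orbits, not three. Moreover, $a$ cannot be carried to $b$ by any such automorphism:
\begin{itemize}
\item the triangles through $a$ contain $b$ and $c$, each lying in two triangles, and $d$ and $j$, each lying in three;
\item the triangles through $b$ contain $a$, lying in two triangles, and $d$, $g$, $j$, each lying in three.
\end{itemize}
So $(b)$, or equivalently $(c)$, needs its own computation. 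If you relied on your three representatives, this case would be left unproved.

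\emph{The characteristic remark.} Remark \ref{rem_b4tKF} tells you nothing about these seven ideals over $\F_2$. Its filtration contains none of them; its principal members are only $(d)$ and $(d+e+f)$. The characteristic dependence visible in the paper concerns ideals such as $(d)$, which lies in that $\F_2$ filtration but is forbidden over $\F_3$ by Lemma \ref{lem_dgj}. The obstruction here is also not a colon ideal involving a form like $b+e+f+g$ failing to be linear. It is already the nonlinear resolution of $(0):w$ itself.
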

\begin{proof}
For $w\in B_1$, a necessary condition for the principal ideal $(w)$ to belong to a Koszul filtration is that its annihilator $(0):w$ belongs to the Koszul filtration as well. Thus it suffices to check that the annihilator of each ideal has a non-linear minimal free resolution. For example, the following computation shows that the ideal $(0):a$ has a non-linear minimal free resolution; it has a cubic minimal generator in degree 3:

{\footnotesize
\begin{Verbatim}[samepage=true]
i10 : betti res(ann(a),LengthLimit=>3)
             0 1  2  3
o10 = total: 1 3 12 56
          0: 1 3 12 55
          1: . .  .  .
          2: . .  .  1
o10 : BettiTally
\end{Verbatim}
}
\noindent
The other linear forms are checked in the same way, and in those cases one only needs to check the first 2 steps of the resolution.
\end{proof}

Another way to show that an ideal $\fa \subset B$ is forbidden is to consider a \emph{partial linear flag of $\fa$},  meaning a chain of ideals
$$\cF \colonequals \{(0)=\fa_0\subset \fa_1\subset \cdots \subset \fa_t=\fa\}$$ 
such that the ideals $\fa_s$ and $\fa_s:\fa_{s+1}$ are minimally generated by linear elements, and $\fa_s$ has $s$ minimal generators. If for every partial linear flag $\cF$ of $\fa$ the set $\cC_{\cF}\colonequals \{(\fa_{s-1}:\fa_s)\}_{s=1}^t$ contains at least one forbidden ideal, then $\fa$ itself cannot belong to a Koszul filtration of $B$. This follows directly from the definition. The following Macaulay2 function will help with this verification process.

\begin{code}\label{M2_colons}
We adapt Algorithm \ref{alg_findflags} to create a Macaulay2 function suited for this section. The inputs of the function include a standard graded algebra \texttt{R} over $\F_3$ and two sets of linear forms: \texttt{L} is a chosen list of minimal generators of an ideal $\fa$, and \texttt{P} is a list of some linear forms in $\fa$. The function returns a list of the unique lists of colon ideals (excluding the first annihilator ideal) that appear in all possible partial linear flags of $\fa$, and that include a principal ideal generated by an element of \texttt{P}.
{\footnotesize
\begin{Verbatim}[samepage=true]
i11 : linearColonIdealsZ3 = (R,L,P) -> (
    colonSets = {}; n = #L; Lall = {0_R};
    scan(L, x -> (Lall = flatten apply(Lall, v -> {v, v + x, v - x});));
    Lall = select(Lall,x->leadCoefficient lift(x,ambient R) == 1);
    for startGen in P do (
        for idealGens in flatten apply(subsets(Lall,n-1), T -> permutations T) do (
            colons = {};
            if ideal(join({startGen},idealGens))==ideal(L) then (
                for i from 0 to n-2 do (
                    CI = trim (ideal(join({startGen},idealGens_{0..(i-1)})):idealGens_i);
                    colons = append(colons,CI);
                    );
                if all(colons,c->member(unique flatten degrees(c),{{1}})) then (
                    colonSets = unique append(colonSets,colons);
                    );
                );
            );
        );
    return colonSets
    );
\end{Verbatim}
}
\end{code}

To reduce which ideals from Lemma \ref{lem_37linearforms} that we need to check, we take symmetry of the algebra into account. The permutations in the next lemma are presented in cycle notation, so for example $(ef)$ denotes the transposition of $e$ and $f$.
\begin{lem}\label{lem_permutation}
    The permutations of the variables $\{a,b, \ldots, l\}$
    \[
    \pi_1=(ef), \quad \pi_2=(hi), \quad \pi_3=(kl), \quad \pi_4=(bc)(dj)(el)(fk)(hi)
    \]
    induce degree preserving automorphisms on the algebra $B$.
\end{lem}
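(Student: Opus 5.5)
The plan is to check directly that each permutation $\pi$ of the variables, extended to a $\K$-algebra automorphism of $S=\K[a,\ldots,l]$, maps the defining ideal $J=(a^2,\ldots,l^2)+\fI$ of $B$ onto itself. Then $\pi$ descends to an automorphism of $B=S/J$. Since $\pi$ sends variables to variables, the induced map preserves degree.

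The squares are clearly permuted among themselves, so the whole argument reduces to $\fI$. Here I would use the triangle structure from Example \ref{b4t}. Each triangle of edges $\{x,y,z\}$ in Figure \ref{fig_b4t} contributes $xy-xz$ and $xy-yz$. The $\K$-span of these two elements equals the span of all pairwise differences of $xy$, $xz$, $yz$, since $xz-yz=(xy-yz)-(xy-xz)$. This span depends only on the set $\{x,y,z\}$ and is invariant under every permutation of it. Hence $\fI=\sum_T \fI_T$, where $\fI_T$ is the ideal generated by the differences of the pairwise products in the triangle $T$. It therefore suffices to show that each $\pi_k$ permutes the set of triangles
\[
\{a,b,j\},\ \{a,c,d\},\ \{c,g,j\},\ \{b,d,g\},\ \{d,e,f\},\ \{g,h,i\},\ \{j,k,l\}.
\]

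For $\pi_1=(ef)$, $\pi_2=(hi)$ and $\pi_3=(kl)$, each transposition swaps two elements within a single triangle and fixes all others, so every triangle is mapped to itself as a set. For $\pi_4=(bc)(dj)(el)(fk)(hi)$, a direct computation gives
\[
\{a,b,j\}\leftrightarrow\{a,c,d\},\qquad \{c,g,j\}\leftrightarrow\{b,g,d\},\qquad \{d,e,f\}\leftrightarrow\{j,l,k\},\qquad \{g,h,i\}\mapsto\{g,i,h\}.
\]
So $\pi_4$ also permutes the triangles; geometrically, it is the reflection of the graph in Figure \ref{fig_b4t}. Consequently $\pi_k(J)\subseteq J$ for each $k$. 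Each $\pi_k$ is an involution, so $\pi_k(J)=J$, and the induced map $B\to B$ is bijective with inverse induced by $\pi_k$ itself.

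There is no real obstacle here. The only step needing care is the observation that each triangle contributes a permutation-invariant span of binomials, which lets us replace a check on individual generators by a check on the set of triangles. Alternatively, one could confirm $\pi_k(J)=J$ in Macaulay2 by comparing the two ideals.
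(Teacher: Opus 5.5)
Your proof is correct and follows the paper's argument. The paper likewise uses that the third difference (e.g.\ $df-ef$) lies in $\fI$, so that $\pi_1,\pi_2,\pi_3$ permute each triangle's binomials up to sign, and it handles $\pi_4$ via the symmetry of the graph in Figure~\ref{fig_b4t}; your observation that each triangle's contribution depends only on the unordered set $\{x,y,z\}$ treats both cases uniformly and makes the graph-symmetry step for $\pi_4$ explicit.
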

\begin{proof}
    Let us first consider the permutation $\pi_1$. Notice that 
    \[df-ef=(de-ef)-(de-df) \in \fI.\]
    We see that 
    \[
    \pi_1(e^2)=f^2, \quad \pi_1(f^2)=e^2, \quad \text{and}
    \]
    \[
    \pi_1(de-df)=-(de-df), \quad \pi_1(de-ef)=df-ef, \quad \pi_1(df-ef)=de-ef,
    \]
    and all other generators of the defining ideal are fixed under $\pi_1$. It follows that 
    \[
    \pi_1((a^2, \ldots, l^2) + \fI)=(a^2, \ldots, l^2) + \fI.
    \]
    Similar arguments apply to $\pi_2$ and $\pi_3$. 

    For $\pi_4$ notice that the graph in Figure \ref{fig_b4t} is invariant under the permutation $\pi_4$ applied to its edges. Hence the defining ideal of the corresponding Möbius algebra will be identical. Alternatively, it is straightforward to check that the defining ideal is indeed invariant under the action by $\pi_4$. 
\end{proof}

\begin{lem}\label{lem_dgj}
The ideals $(d)$, $(g)$, and $(j)$ do not belong to a Koszul filtration of $B$.
\end{lem}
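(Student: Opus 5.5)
The plan is to use the same mechanism as in Lemma \ref{lem_abc}: a principal ideal $(w)$ can only lie in a Koszul filtration if $(0):w$ lies in it too. For $w\in\{d,g,j\}$ the annihilator is linear by Lemma \ref{lem_37linearforms}, so its resolution need not fail to be linear immediately. Instead, I would compute $(0):w$ explicitly in Macaulay2 (for instance \texttt{trim ann(d)}). I expect it to be generated by $w$ together with the linear forms coming from the triangles incident to the edge $w$ in Figure \ref{fig_b4t}. For $d$, these are the triangles $\{a,c,d\}$, $\{b,d,g\}$ and $\{d,e,f\}$, giving generators such as $d$, $a-c$, $b-g$, $e-f$ (up to sign).

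Next I would show that each such annihilator $\fa=(0):w$ is forbidden, and first try the cheap test. I would compute \texttt{betti res(ann(w),LengthLimit=>3)} and look for a nonlinear entry. If one appears, $\fa$ has no linear resolution and cannot lie in any Koszul filtration by \cite[Proposition 1.2]{CTV2001}, which finishes that case exactly as in Lemma \ref{lem_abc}.

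If the resolution of $\fa$ looks linear in low degrees, I would use partial linear flags. Since $\fa$ has only a few generators, the function \texttt{linearColonIdealsZ3} from Code \ref{M2_colons} can enumerate every partial linear flag of $\fa$ over $\F_3$. Any Koszul filtration containing $\fa$ must contain some such flag, and its first step is a principal ideal $(\ell)$ with $\ell\in\fa$. By Lemma \ref{lem_37linearforms}, $\ell$ must be (up to sign) one of the 37 forms, so I take \texttt{P} to be those forms lying in $\fa$. For each returned list of colon ideals, and each possible starting principal ideal, I would check that some member is forbidden. The forbidden ideals available are those already known from Lemma \ref{lem_abc}, or ideals whose resolutions are detectably nonlinear.

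I would use Lemma \ref{lem_permutation} to cut the work: $\pi_4$ swaps $d$ and $j$ and fixes $g$, so it suffices to treat $d$ and $g$. The main obstacle is circularity. A flag of $(0):d$ may start at $(g)$ or $(j)$, or at forms like $(d+e+f)$ not yet shown to be forbidden. To handle this, I would treat $d$, $g$, $j$ simultaneously. Suppose some Koszul filtration contains one of $(d),(g),(j)$. Choose a filtration member $(w)$ with $w\in\{d,g,j\}$; following the filtration downward from $(0):w$ forces, at some stage, another principal ideal from $\{(d),(g),(j)\}$ or from the list in Lemma \ref{lem_abc}. If the computation shows that every flag either hits a nonlinear resolution or a Lemma \ref{lem_abc} ideal, or returns only to $\{(d),(g),(j)\}$ through colons that are themselves forbidden, the claim follows.

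If forms such as $d\pm e\pm f$ show up as flag starts, I would first try to show directly that their annihilators have nonlinear resolutions, so that they become forbidden without circular reasoning.
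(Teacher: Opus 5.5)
Your plan is essentially the paper's proof. The paper computes $(0):d=(e-f,d,b-g,a-c)$ and $(0):g=(h-i,g,c-j,b-d)$ and runs \texttt{linearColonIdealsZ3} over their partial linear flags. It then checks that every resulting list of colon ideals contains an ideal with nonlinear resolution (four such ideals arise in each case, e.g.\ $(d,c,a)$ for $d$), and obtains $(j)$ from $(d)$ via $\pi_4$.

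The circularity you anticipate does not occur. The form $d$ (resp.\ $g$) is the only one of the 37 forms of Lemma \ref{lem_37linearforms} lying in $(0):d$ (resp.\ $(0):g$); in particular $g,j,d+e+f\notin(0):d$. So every flag starts at $(w)$ itself, which is harmless because $(w)$ is assumed to be in the filtration. Your simultaneous argument is therefore not needed, and it would not break a genuine cycle anyway.
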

\begin{proof}
First, we argue the case for the ideal $(d)$. Start by computing $(0):d$.
{\footnotesize
\begin{Verbatim}[samepage=true]
i12 : ann(d)
o12 = ideal (e - f, d, b - g, a - c)
\end{Verbatim}
}
\noindent
The only linear form in $(0):d$ which appears in the list given by Lemma \ref{lem_37linearforms} is $d$; all other linear forms in $(0):d$ generate forbidden principal ideals. Thus we may use Code \ref{M2_colons} to compute the following (unnecessary output has been truncated):
{\footnotesize
\begin{Verbatim}[samepage=true]
i13 : netList linearColonIdealsZ3(B,{e-f,d,b-g,a-c},{d})
      +-------------------------------+--------------------------------------+---+
o13 = |ideal (d, c, a)                |ideal (j, g, d, c, b, a)              |...|
      +-------------------------------+--------------------------------------+---+
      |ideal (d, c, a)                |ideal (f, e, d, a - c)                |...|
      +-------------------------------+--------------------------------------+---+
      |ideal (f, e, d)                |ideal (e - f, d, c, a)                |...|
      +-------------------------------+--------------------------------------+---+
      |ideal (d, b + c + j, a + g + j)|ideal (j, g, d, c, b, a)              |...|
      +-------------------------------+--------------------------------------+---+
      |ideal (d, b + c + j, a + g + j)|ideal (f, e, d, a + b - c - g)        |...|
      +-------------------------------+--------------------------------------+---+
      |ideal (f, e, d)                |ideal (e - f, d, b + c + j, a + g + j)|...|
      +-------------------------------+--------------------------------------+---+
\end{Verbatim}
}
\noindent 
This yields six distinct sets of colon ideals, each of which contains one of the ideals
$$(d,c,a), (e-f,d,c,a), (d,b+c+j,a+g+j), (e-f,d,b+c+j,a+g+j).$$ 
In other words, if $(0):d$ belongs to a Koszul filtration of $B$, then at least one of these ideals must also belong to the Koszul filtration. However, these four ideals have non-linear minimal free resolutions. Check each of these as before; for example
{\footnotesize
\begin{Verbatim}[samepage=true]
i14 : betti res(ideal(d,c,a),LengthLimit=>3)
             0 1  2  3
o14 = total: 1 3 12 55
          0: 1 3 12 54
          1: . .  .  1
o14 : BettiTally
\end{Verbatim}
}
\noindent
shows that $(d,c,a)$ is forbidden. The others are checked similarly. Thus $(0):d$ is forbidden, and hence so is $(d)$.

The permutation $\pi_4$ from Lemma \ref{lem_permutation} shows that $(j)$ is forbidden. The case for $(g)$ is similar to that of $(d)$, so we only sketch the idea: First notice that $(0):g=(h-i,g,c-j,b-d)$, and that (again using \texttt{linearColonIdealsZ3}) inclusion of this ideal in a Koszul filtration would force the inclusion of at least one of the ideals $(g, b - c - d + j, a)$, $(h - i, g, b - c - d + j, a)$, $(g, b + c - d - j, a + d + j)$, or $(h - i, g, b + c - d - j, a + d + j)$. All of these have non-linear minimal free resolutions, even within 3 steps, and hence $(g)$ is forbidden as well.
\end{proof}

\begin{lem}\label{lem_fed}
The ideals $(f,e,d)$, $(i,h,g)$, and $(l,k,j)$ do not belong to a Koszul filtration of $B$.
\end{lem}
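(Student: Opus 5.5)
We need to show $(f,e,d)$ is forbidden. The statements for $(i,h,g)$ and $(l,k,j)$ should then follow by symmetry or by an analogous computation. One point about symmetry: $\pi_4$ from Lemma \ref{lem_permutation} sends $(f,e,d)$ to $(k,l,j)=(l,k,j)$, so the third ideal follows from the first. For $(i,h,g)$, $\pi_4$ maps it to itself, so no permutation reduces it to $(f,e,d)$, and I would treat it by an analogous computation. The case $(g)$ in Lemma \ref{lem_dgj} was handled the same way.

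The strategy for $\fa=(f,e,d)$ is the partial-linear-flag argument described before Code \ref{M2_colons}. Any Koszul filtration containing $\fa$ must contain a chain $(0)=\fa_0\subset\fa_1\subset\fa_2\subset\fa_3=\fa$ whose successive colons $\fa_{s-1}:\fa_s$ all lie in the filtration. In particular $\fa_1=(w)$ for some $w\in\fa_1$, so $w\in\fa$ is a linear form with linear annihilator.

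Step one is to reduce the possible $w$. By Lemma \ref{lem_37linearforms}, and up to sign, $w$ must be one of the listed forms lying in $(d,e,f)$. These are $d,e,f,d\pm e,d\pm f,e+f,d+e+f,d-e-f$. Since $(d)$ is forbidden by Lemma \ref{lem_dgj}, $w\ne\pm d$. By $\pi_1$ the cases $e$ and $f$ are symmetric, as are $d\pm e$ and $d\pm f$. So I would run
\[
\texttt{linearColonIdealsZ3}(B,\{f,e,d\},P)
\]
with $P=\{e,f,e+f,d+e,d-e,d+f,d-f,d+e+f,d-e-f\}$.

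Step two is to examine the output. Each returned list consists of the colons $\fa_1:\fa_2$ and $\fa_2:\fa_3$ for a partial linear flag starting at $(w)$. In addition, $(0):w$ itself must be in the filtration, and so must a chain below it. For each list I would exhibit a member that is forbidden. The main tools are a non-linear resolution, checked with \texttt{betti res(...,LengthLimit=>3)}, or a principal ideal from Lemma \ref{lem_abc} or Lemma \ref{lem_dgj}. I expect many colons to contain $d$ and pieces such as $(d,c,a)$ or $(f,e,d,a-c)$, which already appeared as forbidden or nearly so.

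If some flag has all its colons with linear resolutions, I would fall back to a recursive step. I would apply \texttt{linearColonIdealsZ3} to the offending colon ideal, or to $(0):w$ such as $(0):(e+f)$. I would then show that every flag of it forces a forbidden ideal, exactly as was done for $(0):d$ in Lemma \ref{lem_dgj}.

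The main obstacle is the case $w\in\{e,f,e+f,d+e+f\}$, where $(0):w$ may be small and have a linear resolution. There the contradiction must come from deeper colons. Controlling the branching of this recursion, and checking that every branch terminates in an ideal with a non-linear resolution, is the step I expect to require the most computation. I would carry out this same check for $(i,h,g)$, using $\pi_2$ to pair $h$ with $i$.
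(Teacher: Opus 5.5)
Your setup agrees with the paper's. You use the same nine admissible generators $P$ for $\fa_1$ (all forms of Lemma~\ref{lem_37linearforms} in the span of $d,e,f$ except $d$), the same call to \texttt{linearColonIdealsZ3}, $\pi_4$ for $(l,k,j)$, and a separate computation for $(i,h,g)$. But the proposal stops exactly where the proof begins. You never say what the computation returns or which forbidden ideal closes each branch, and the witnesses you anticipate do not occur. The output consists of two colon lists, each made up of two ideals. One is $(f,e,d)$ itself, which is useless since that is the ideal under attack. The other is $\fa=(f,e,d,b-g,a-c)$. The annihilators $(0):w$ do not help either. For example, $(0):e=(e,d-f)$, and every partial linear flag of it forces only $(f,e,d)$ again, which is circular. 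So the lemma reduces entirely to showing that $\fa$ is forbidden. Your proposal has no argument for that beyond a generic fallback, and the paper does not get it from a resolution check on $\fa$ but from an analysis of its flags.

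That fallback, running \texttt{linearColonIdealsZ3} on the offending ideal, is the step that breaks down. Since $\fa$ has five generators over $\F_3$, the function would range over $121$ candidate forms and about $2\cdot 10^{8}$ ordered $4$-tuples for each starting generator; the paper notes this is computationally out of reach.

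The missing idea is to split according to the step at which $b-g$ enters a flag $(0)=\fa_0\subset\cdots\subset\fa_5=\fa$. Since $\fa_1$ must be generated by an element of $P$, one has $b-g\in\fa_s\setminus\fa_{s-1}$ for some $2\le s\le 5$.
\begin{itemize}
\item For $s=2,3$, every admissible $\fa_{s-1}$ gives a colon $\fa_{s-1}:\fa_s=\fa_{s-1}:(b-g)$ with quadratic minimal generators.
\item For $s=4$, all $630$ candidates for $\fa_3$ with linear colon $\fa_3:(b-g)$ have non-linear resolutions.
\item For $s=5$, linear algebra forces $\fa_4=(f+\lambda_1(b-g),\,e+\lambda_2(b-g),\,d+\lambda_3(b-g),\,a-c+\lambda_4(b-g))$ with $\lambda_i\in\F_3$, and all $81$ of these have non-linear resolutions.
\end{itemize}
The same device, with $c-j$ in place of $b-g$, disposes of $\fb=(i,h,g,c-j,b-d)$. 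That is the ideal forced in the $(i,h,g)$ case, after also excluding $(g)$ via Lemma~\ref{lem_dgj}. Without such a reduction, what you have is a search strategy whose decisive step is unspecified, not a proof.
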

\begin{proof}
We first show that the ideal $(f,e,d)$ is forbidden.

The set $P=\{f,e,e+f,d+f,d-f,d+e,d+e+f,d-e,d-e-f\}$ consists of all linear forms in $(f,e,d)$ which appear in the list given by Lemma \ref{lem_37linearforms},
excluding $d$, since we know by Lemma \ref{lem_dgj} that $(d)$ is forbidden. Now use Code \ref{M2_colons} to compute:
{\footnotesize
\begin{Verbatim}[samepage=true]
i15 : netList linearColonIdealsZ3(B,{f,e,d},{f,e,e+f,d+f,d-f,d+e,d+e+f,d-e,d-e-f})
      +-----------------------------+-----------------------------+
o15 = |ideal (f, e, d, b - g, a - c)|ideal (f, e, d)              |
      +-----------------------------+-----------------------------+
      |ideal (f, e, d)              |ideal (f, e, d, b - g, a - c)|
      +-----------------------------+-----------------------------+
\end{Verbatim}
}
\noindent
This output shows that every set of colon ideals (of a partial linear flag of $(f,e,d)$ passing through a principal ideal generated by an element of $P$) includes the ideal $\fa=(f,e,d,b-g,a-c)$. Thus, if $(f,e,d)$ belongs to a Koszul filtration of $B$, then so does $\fa$. We next argue that this is impossible, that is, $\fa$ is a forbidden ideal.

To do this, we analyze partial linear flags, and their colon ideals, of $\fa$. This turns out to be computationally challenging to do directly as before with the function \texttt{linearColonIdealsZ3}, and so instead we break up the problem into cases.
Consider the element $b-g\in \fa$. A partial linear flag of $\fa$ has the form 
$$(0)=\fa_0\subset \fa_1\subset \fa_2\subset \fa_3\subset \fa_4 \subset \fa_5=\fa.$$
The principal ideal $\fa_1$ is generated by one of the linear forms in $P$, as those are the only linear forms contained in $\fa$ allowed by Lemma \ref{lem_37linearforms}.
In particular, $\fa_1\not=(b-g)$ so 
it must be the case that $b-g\in \fa_s\setminus \fa_{s-1}$ for some $s=2,\ldots,5$.
This gives 4 cases to consider. We claim that for $s=2,3$, the colon ideal $\fa_{s-1}:\fa_s$ has quadratic minimal generators, and that for $s=4,5$, the ideal $\fa_{s-1}$ is forbidden. First, set 
{\footnotesize
\begin{Verbatim}[samepage=true]
i16 : L = {f,e,d,b-g,a-c};
i17 : P = {f,e,e+f,d+f,d-f,d+e,d+e+f,d-e,d-e-f};
\end{Verbatim}
}

\noindent
Case 1: $b-g\in \fa_2\setminus \fa_1$. 

For every $w$ in \texttt{P}, the colon ideal $(w):b-g$ contains quadratic minimal generators:
{\footnotesize
\begin{Verbatim}[samepage=true]
i18 : colonDegs = {};
i19 : for startGen in P do (
          colon = trim (ideal(startGen):b-g);
          colonDegs = unique append(colonDegs,unique degrees(colon));
          );
i20 : colonDegs 
o20 = {{{1}, {2}}}
o20 : List
\end{Verbatim}
}

\noindent
Case 2: $b-g\in \fa_3\setminus \fa_2$.

Again, check that all colon ideals $\fa_2:b-g$ contain quadratic minimal generators:
{\footnotesize
\begin{Verbatim}[samepage=true]
i21 : Lall = {0_B};
i22 : scan(L, x -> (Lall = flatten apply(Lall, v -> {v, v + x, v - x});))
i23 : Lall = select(Lall,x->leadCoefficient lift(x,ambient B) == 1);
i24 : colonDegs = {};
i25 : for startGen in P do (
          for idealGens in subsets(Lall,1) do (
              if not isSubset(ideal(b-g),ideal(startGen,idealGens_0)) then (
                  CI = trim (ideal(startGen,idealGens_0):b-g);
                  colonDegs = unique append(colonDegs,unique degrees(CI));
                  );
              );
          );
i26 : colonDegs
o26 = {{{1}, {2}}}
o26 : List
\end{Verbatim}
}

\noindent
Case 3: $b-g\in \fa_4\setminus \fa_3$.

We show that every choice of $\fa_3$ in a partial linear flag of $\fa$ is forbidden. Start by computing a set \texttt{subIdeals} of all possible ideals $\fa_3$: 
{\footnotesize
\begin{Verbatim}[samepage=true]
i27 : subIdeals = {};
i28 : for startGen in P do (
          for idealGens in subsets(Lall,2) do (
              if not isSubset(ideal(b-g),ideal(startGen,idealGens_0,idealGens_1)) 
                  then (
                      CI = trim (ideal(startGen,idealGens_0,idealGens_1):b-g);
                      if member(unique flatten degrees(CI),{{1}}) then (
                          SI = ideal(startGen,idealGens_0,idealGens_1);
                          subIdeals = unique append(subIdeals,trim SI);
                          );
                      );
              );
          );
i29 : #subIdeals
o29 : 630
\end{Verbatim}
}
\noindent
Next, check that the 630 ideals in \texttt{subIdeals} have non-linear minimal free resolutions. Here is a small helper function to check linearity for the first few steps:
{\footnotesize
\begin{Verbatim}[samepage=true]
i30 : resLooksLinear = (J,t) -> (
          F = res(J,LengthLimit=>t); K = keys betti F;
          rowIndices = unique apply(K,k->(k_1)_0-k_0);
          #rowIndices == 1
          )
o30 = resLooksLinear
o30 : FunctionClosure
\end{Verbatim}
}
\noindent
This returns \texttt{true} if the first $t$ steps of the minimal free resolution are linear. Use this to check whether these 630 ideals are linear for the first five steps:
{\footnotesize
\begin{Verbatim}[samepage=true]
i31 : for I in subIdeals do (
          if resLooksLinear(I,4)==true then (
              if resLooksLinear(I,5)==true then (
                  print I;
                  );
              );
          );
\end{Verbatim}
}
This returns no ideals; in other words, all 630 ideals have non-linear minimal free resolutions, and are thus forbidden.

\noindent 
Case 4: $b-g\in \fa\setminus \fa_4$.

In this case, elementary linear algebra yields
$$\fa_4=(f+\lambda_1(b-g),e+\lambda_2(b-g),d+\lambda_3(b-g),(a-c)+\lambda_4(b-g)),$$ for coefficients $\lambda_i\in \{0,1,-1\}$. Now consider the following:
{\footnotesize
\begin{Verbatim}[samepage=true]
i32 : Lbase = {f,e,d,a-c};
i33 : Ldiffs = {0_B,b-g,-b+g};
i34 : L = apply(Lbase, v -> apply(Ldiffs, d -> v + d));
i35 : for w0 in L_0 do for w1 in L_1 do for w2 in L_2 do for w3 in L_3 do (
          if resLooksLinear(ideal(w0,w1,w2,w3),4) then (
              print ideal(w0,w1,w2,w3);
              );
          );
\end{Verbatim}
}
\noindent
This prints no ideals, and thus all have non-linear minimal free resolutions. Thus $\fa$ is a forbidden ideal, and hence $(f,e,d)$ is forbidden as well. 

Because $(f,e,d)$ is forbidden, the permutation $\pi_4$ from Lemma \ref{lem_permutation} shows that $(l,k,j)$ is also forbidden. The argument for showing $(i,h,g)$ is forbidden is similar to that of $(f,e,d)$, so we only sketch the argument. First, observe that the inclusion of any possible partial linear flag of $(i,h,g)$ forces the inclusion of the ideal $\fb=(i,h,g,c-j,b-d)$, here using also that such a flag cannot contain the principal ideal $(g)$ by Lemma \ref{lem_dgj}. One then argues that $\fb$ is forbidden by considering all partial linear flags $(0)=\fb_0\subset \fb_1\subset \cdots \subset \fb_5=\fb$ and whether $c-j\in \fb_s$ for each $s$. Since $(c-j)$ is forbidden by Lemma \ref{lem_37linearforms}, there are four cases to consider: $c-j\in \fb_s\setminus \fb_{s-1}$ for $s=2,\ldots,5$. For $s=2,3$ argue that if $c-j\in \fb_s\setminus \fb_{s-1}$ then $\fb_{s-1}:\fb_s$ has quadratic minimal generators. For $s=4,5$, every choice of the ideal $\fb_{s-1}$ has a non-linear minimal free resolution.
\end{proof}

\begin{proof}[Proof of Theorem \ref{thm_b4t}]
By definition, any Koszul filtration must have a principal ideal. Thus, if a Koszul filtration of $B$ exists,  it must contain some principal ideal generated by one of the 37 linear forms given in Lemma \ref{lem_37linearforms}. In light of Lemma \ref{lem_abc}, the principal generator cannot have any term whose support belongs to $\{a,b,c\}$. Further, by Lemma \ref{lem_dgj} the ideals $(d)$, $(g)$, and $(j)$ are forbidden from a Koszul filtration. This leaves 27 linear forms. Further accounting for the permutations of variables from Lemma \ref{lem_permutation} leaves only 14 linear forms: $e$, $h$, $d+e$, $d-e$, $e+f$, $g+h$, $g-h$, $h+i$, $j+k$, $j-k$, $d+e+f$, $d-e-f$, $g+h+i$, $g-h-i$.

Suppose that $(e)$ belongs to a Koszul filtration of $B$. Then:
{\footnotesize
\begin{Verbatim}[samepage=true]
i36 : ann(e)
o36 = ideal (e, d - f)
o36 : Ideal of B
i37 : linearColonIdealsZ3(B,{e,d-f},{e,d-f,e-d+f})
o37 = {{ideal (f, e, d)}}
o37 : List
\end{Verbatim}
}
\noindent
Thus, as $(f,e,d)$ is forbidden, so is $(e)$. The other 13 ideals proceed in exactly the same way: the inclusion of $(d+e)$, $(d-e)$, $(e+f)$, $(d+e+f)$, or $(d-e-f)$ would force the forbidden ideal $(f,e,d)$ to be included, the inclusion of $(j+k)$ or $(j-k)$ would force the forbidden ideal $(l,k,j)$ to be included, and the inclusion of $(h)$, $(g+h)$, $(g-h)$, $(h+i)$, $(g+h+i)$, or $(g-h-i)$ would force the forbidden ideal $(i,h,g)$ to be included. Thus no principal ideal can belong to a Koszul filtration of $B$, and so $B$ has no Koszul filtration.
\end{proof}

\begin{rem}
It was shown in \cite{LMMP_2025} that the graded Möbius algebra of the cycle matroid of the broken $n$-trampoline graph is G-quadratic for $n\geq 3$. We claim that $n=4$ is the smallest case where this algebra fails to have a Koszul filtration. By Theorem \ref{thm_b4t}, it suffices to show that for $n=3$ this algebra has a Koszul filtration over any field $\K$. The graded Möbius algebra of the cycle matroid of the broken 3-trampoline
\begin{center}
\begin{tikzpicture}[
    vertex/.style={
        circle, 
        draw=black, 
        fill=black, 
        minimum size=3pt, 
        inner sep=0pt
    },
    labelspace/.style={
        inner sep=5pt
    }
]
    \node[vertex] (TL) at (0,1) {}; 
    \node[vertex] (TR) at (2,1) {}; 
    \node[vertex] (BL) at (0.5,0) {}; 
    \node[vertex] (BR) at (1.5,0) {}; 
    \node[vertex] (Top)   at (1,1) {};

    \draw (Top) -- (BR) node[pos=0.4, right] {$b$};
    \draw (BL) -- (BR) node[midway, below] {$a$};
    \draw (Top) -- (BL) node[pos=0.4, left] {$c$};
    \draw (TR) -- (BR) node[midway, right] {$e$};
    \draw (TL) -- (BL) node[midway, left] {$g$};
    \draw (TR) -- (Top) node[midway, above] {$d$};
    \draw (Top) -- (TL) node[midway, above] {$f$};
\end{tikzpicture}
\end{center}
\noindent
is presented as
$$B'=\frac{\K[a,b,c,d,e,f,g]}{(a^2,b^2,c^2,d^2,e^2,f^2,g^2,gc-fc,gf-fc,db-eb,de-eb,ac-bc,ab-bc)}.$$
The given generators of the defining ideal of $B'$ form a quadratic Gröbner basis with the lexicographic order, such that $a>d>g>f>e>b>c$. In fact, we argue that 
$$\{(0),(e),(e,d-b),(e,b,d),(e,b,d,a-c),(e,b,d,a,c),(e,b,d,a,c,g-f),\m_{B'}\}$$
 is a Gröbner flag of $B'$, independent of the characteristic.
Let $G_0$ be the given set of generators for the defining ideal of $B'$, and set $\alpha_1=e$, $\alpha_2=d-b$, $\alpha_3=b$, $\alpha_4=a-c$, $\alpha_5=c$, $\alpha_6=g-f$, and $\alpha_7=f$. Further define $G_s=G_0\cup\{\alpha_1,\ldots,\alpha_s\}$. Each $G_s$ is a Gröbner basis, independent of the characteristic of the underlying field. We verify in Macaulay2 that if $\operatorname{char}(\K)=0$, then
\begin{equation}\label{eq_broken3trampGB}
(G_s):\alpha_{s+1}=(G_{s+2}) \quad \text{for } s=0,\ldots ,5, \quad \text{and } (G_6):\alpha_7=(G_7).
\end{equation}
\noindent
Proceed with an argument similar to that of \cite[Proof of Lemma 3.6]{DAli2017}. A direct computation shows that the containments $(G_s):\alpha_{s+1}\supseteq (G_{s+2})$ and $(G_6):\alpha_7\supseteq (G_7)$ hold independent of characteristic. Moreover, for each $s=0, \ldots,6$, multiplication by $\alpha_{s+1}$ induces a short exact sequence of graded $S$-modules:
\[\xymatrix{
0 \ar[r] & S/((G_s):\alpha_{s+1})[-1] \ar[r] & S/(G_s) \ar[r] & S/(G_{s+1}) \ar[r] & 0
}
\]
The Hilbert series of the middle and right modules in this sequence are independent of the characteristic. This follows because the ideals $G_s$ and $G_{s+1}$ are Gröbner bases for $(G_s)$ and $(G_{s+1})$, respectively. 
Hilbert series are additive on short exact sequences, so these two series determine the series for $S/((G_s):\alpha_{s+1})$, and hence this Hilbert series is also independent of the characteristic. Checking it is the expected series in characteristic $0$ is thus sufficient to conclude the equalities in \eqref{eq_broken3trampGB} hold in any characteristic.
\end{rem}

\appendix
\section{Algorithms in Macaulay2}\label{sec_appendix}
We implement in Macaulay2 (version 1.21) \cite{M2} the algorithms from Section \ref{sec_alg}. Let $R$ be a standard graded $\K$-algebra and $L\subseteq R_1$ a list of linear elements of $R$.

\begin{code}\label{M2_linear}
The following is an implementation of Algorithm \ref{alg_partial_linear}. Application of the function $\texttt{partialLinearFiltration}(R,L)$ produces a partial linear filtration of $R$, which consists of ideals and colon ideals that are generated by elements of $L$.

{\footnotesize
\begin{Verbatim}[samepage=true]
partialLinearFiltration = (R,L) -> (
    linearIdeals = unique apply(subsets L, a->trim promote(ideal(a),R));
    F = {{promote(ideal(),R)}};
    for i from 1 to numgens R do (
        newIdeals = {};
        for I in F#(i-1) do ( 
            for a in L do (
                if isSubset(ideal(a), I) then continue;
                colIdeal = trim I:a;
                if member(colIdeal,linearIdeals) then (
                    toInclude = trim(I+ideal(a));
                    newIdeals = unique append(newIdeals,toInclude);
                    );
                );
            );
        F = append(F,newIdeals);
        );
    return F;
    );
\end{Verbatim}
}
\end{code}

\begin{code}\label{M2_koszul}
Next, we give an implementation of Algorithm \ref{alg_partial_koszul}. Let $LF$ be a linear filtration of $R$, presented as a list of lists of ideals in $R$. Application of $\texttt{partialKoszulFiltration}(R,LF)$ produces a partial Koszul filtration of $R$.

{\footnotesize
\begin{Verbatim}[samepage=true]
partialKoszulFiltration = (R,LF) -> (
    F = {}; Fprev = {};
    for i from 0 to #LF-1 do (
        F = append(F,unique apply(LF#(i),I->trim I));
        );
    while F =!= Fprev do (
        Ftrim = {{promote(ideal(),R)}};
        for i from 1 to #F-1 do (
            FtrimLevel = {};
            for I1 in F#(i-1) do (
                for I2 in F#(i) do (   
                    if isSubset(I1,I2) then (
                        colIdeal = trim I1 : I2; n=numgens colIdeal;
                        if colIdeal == ideal(0_R) or member(colIdeal,F#(n)) then (
                            FtrimLevel = append(FtrimLevel, I2);
                            );
                        );
                    );
                );
            FtrimLevel = unique FtrimLevel;
            Ftrim = append(Ftrim,FtrimLevel);
            );
        Fprev = F; F = Ftrim;
        );
    return unique F;
    );
\end{Verbatim}
}
\noindent
Recall that if the output contains the maximal ideal of $R$, then it is a Koszul filtration. Often, we take $LF=\texttt{partialLinearFiltration}(R,L)$ from \ref{M2_linear}. On the other hand, taken independently this function can be used to verify a given set of linear ideals is a Koszul filtration, and if it is not, then trim it down to the largest Koszul filtration contained in it (if it exists). In this way, it can be seen as an improvement of the function \texttt{IsKoszulFiltration} written in CoCoA in \cite{BN2013}.
\end{code}

\begin{code}\label{M2_flags}
Finally, we implement Algorithm \ref{alg_findflags} in Macaulay2. Let $\fa$ be an ideal of $R$ with $s$ minimal linear generators. The function $\texttt{partialLinearFlags}(R,L,\fa)$ produces a list of all partial linear flags of $\fa$ having generators from $L$. It returns the empty set if there are no such flags. \\
{\footnotesize
\begin{Verbatim}[samepage=true]
partialLinearFlags = (R,L,I)  -> (
    linearIdeals = unique apply(subsets L, a->trim promote(ideal(a),R));
    Flags = {{I}}; i = numgens trim I;
    while  i > 0 and #Flags =!= 0 do (
        Flagsnew = {}; i = i - 1;    
        for F in Flags do (
            for a in linearIdeals do (
                if numgens trim a == i and isSubset(a, F#-1) then (
                    colIdeal = a:(F#-1);	    
                    if isSubset(unique degrees trim colIdeal,{{},{1}}) then (
                        Fnew = append(F,a);
                        Flagsnew = append(Flagsnew,Fnew);
                        );
                    );
                );
            );
        Flags = Flagsnew;
        );
    return Flags
    );
\end{Verbatim}
}
\end{code}


\providecommand{\bysame}{\leavevmode\hbox to3em{\hrulefill}\thinspace}
\providecommand{\MR}{\relax\ifhmode\unskip\space\fi MR }
\providecommand{\MRhref}[2]{%
  \href{http://www.ams.org/mathscinet-getitem?mr=#1}{#2}
}
\providecommand{\href}[2]{#2}

\end{document}